\documentclass{amsart}
\usepackage{etoolbox}
\usepackage[utf8]{inputenc}
\usepackage[T1]{fontenc}
\usepackage{lmodern}
\usepackage[foot]{amsaddr}
\usepackage{fullpage}
\usepackage{microtype}
\usepackage{amsfonts}
\usepackage{graphicx}
\usepackage{tabularx}
\usepackage{array}
\usepackage[usenames,dvipsnames]{color}
\usepackage{amsmath}
\usepackage{amsthm}
\usepackage{amssymb}
\usepackage{xcolor}
\usepackage{tikz}
\usetikzlibrary{backgrounds}
\usetikzlibrary{shapes.geometric}
\usetikzlibrary{calc}
\usepackage{hyperref}
\usepackage{algorithm}
\usepackage{dsfont}
\usepackage{booktabs,makecell,diagbox}

\tikzstyle{legend_general}=[rectangle, rounded corners, thin,
                          top color= white,bottom color=lavander!25,
                          minimum width=2.5cm, minimum height=0.8cm,
                          violet]

\newtheorem{theorem}{Theorem}[section]

\newtheorem{conjecture}[theorem]{Conjecture}

\newtheorem{lemma}[theorem]{Lemma}
\newtheorem{corollary}[theorem]{Corollary}
\theoremstyle{definition}

\def\epsilon{\varepsilon}

\title{Oriented Colouring Graphs of Bounded \\
Degree and Degeneracy}
\author{A. Clow, L. Stacho}
\date{\today}

\begin{document}

\maketitle

\begin{abstract}
This paper considers upper bounds on the oriented chromatic number $\chi_o(G)$, of an oriented graph $G$ in terms of its $2$-dipath chromatic number $\chi_2(G)$, degeneracy $d(G)$, and maximum degree $\Delta(G)$. In particular, we show that for all graphs $G$ with $\chi_2(G) \leq k$ where $k \geq 2$ and $d(G) \leq t$ where $t \geq \log_2(k)$, $\chi_o(G) = 33/10(k t^2 2^t)$. This improves an upper bound of MacGillivray, Raspaud, and Swartz of the form $\chi_o(G) \leq 2^{\chi_2(G)} -1$  to a polynomial upper bound for many classes of graphs, in particular, those with bounded degeneracy. Additionally, we asymptotically improve bounds for the oriented chromatic number in terms of maximum degree and degeneracy. For instance, we show that $\chi_o(G) \leq  (2\ln2 +o(1))\Delta^2 2^\Delta$ for all graphs,  and $\chi_o(G) \leq (2+o(1))\Delta d 2^d$ for graphs where degeneracy grows sublinearly in maximum degree. Here the asypmtotics are in $\Delta$. The former improves the asymptotics of a results by Kostochka, Sopena, and Zhu \cite{kostochka1997acyclic}, while the latter improves the asymptotics of a result by Aravind and Subramanian \cite{aravind2009forbidden}. Both improvements are by a constant factor.
\end{abstract}

\section{Introduction}

An \textit{oriented graph} $G$ is a directed graph whose underlying graph is simple. Throughout this paper every graph we consider is an oriented graph, so it should be understood that stating $G$ is a graph is synonymous with stating $G$ is an oriented graph. We identify parameters of the underlying graph of an oriented graph $G$ as would be normally done given simple graphs and parameters of the orientation of $G$ as is standard with directed graphs. For example $deg(v)$ is the degree of the vertex $v$ independent of orientation whereas $deg^+(v)$ denotes the out-degree, and $deg^-(v)$ the in-degree of the vertex $v$. Similarly, a path $p$ in the graph need not be a directed path. When a path is directed we will call it a dipath.

An \textit{oriented colouring} of a graph $G= (V,E)$ is a proper vertex vertex colouring $c : V \rightarrow \mathbb{N}$ such that if $(u,v), (x,y) \in E$, then 
\begin{itemize}
	\item $c(u) = c(y)$ implies $c(v) \neq c(x)$, and
	\item $c(v) = c(x)$  implies $c(u) \neq c(y)$.
\end{itemize}
If the image of $c$ has cardinality $k$, then we say $G$ has an \textit{oriented $k$-colouring}. For some examples consider Figure~\ref{Fig: OColouringExample}.

\begin{figure}[h!]\label{Fig: OColouringExample}
\begin{center}
    \scalebox{0.75}{
        \begin{tikzpicture}[node distance={15mm}, thick, main/.style = {draw, circle}] 
        
\node[main][fill = blue] (1) at (1,0) {};
	\node[fill=none] at (1,-0.5) (nodes) {$1$};
\node[main][fill = green]  (2) at (0,2) {};
	\node[fill=none] at (0,1.5) (nodes) {$2$};
\node[main][fill = red]  (3) at (2,3) {};
	\node[fill=none] at (2,3.5) (nodes) {$4$};
\node[main][fill = orange]  (4) at (4,2) {};
	\node[fill=none] at (4,1.5) (nodes) {$5$};
\node[main][fill = cyan]  (5) at (3,0) {};
	\node[fill=none] at (3,-0.5) (nodes) {$6$};
\node[main][fill = pink]  (6) at (2,1.5) {};
	\node[fill=none] at (2,1) (nodes) {$3$};
\draw  [->] (2) -- (1);
\draw  [->] (1) -- (5);
\draw  [->] (6) -- (5);
\draw  [->] (4) -- (5);
\draw  [->] (4) -- (3);
\draw  [->] (3) -- (2);
\draw  [->] (6) -- (3);
\draw  [->] (2) -- (6);

\node[main][fill = blue] (7) at (6,0) {};
	\node[fill=none] at (6,-0.5) (nodes) {$1$};
\node[main][fill = green]  (8) at (6,2) {};
	\node[fill=none] at (6,2.5) (nodes) {$2$};
\node[main][fill = green]  (9) at (8,0) {};
	\node[fill=none] at (8,-0.5) (nodes) {$2$};
\node[main][fill = blue]  (10) at (8,2) {};
	\node[fill=none] at (8,2.5) (nodes) {$1$};
\draw  [->] (7) -- (8);
\draw  [->] (10) -- (8);
\draw  [->] (10) -- (9);
\draw  [->] (7) -- (9);

\node[main][fill = blue] (11) at (10,0) {};
	\node[fill=none] at (10,-0.5) (nodes) {$1$};
\node[main][fill = green]  (12) at (10,2) {};
	\node[fill=none] at (10,2.5) (nodes) {$2$};
\node[main][fill = red]  (13) at (12,0) {};
	\node[fill=none] at (12,-0.5) (nodes) {$4$};
\node[main][fill = pink]  (14) at (12,2) {};
	\node[fill=none] at (12,2.5) (nodes) {$3$};
\draw  [->] (11) -- (12);
\draw  [->] (12) -- (14);
\draw  [->] (14) -- (13);
\draw  [->] (13) -- (11);
        \end{tikzpicture}
    }
\end{center}
\caption{Three examples of oriented colourings (consider each component as its own graph).}
\end{figure}
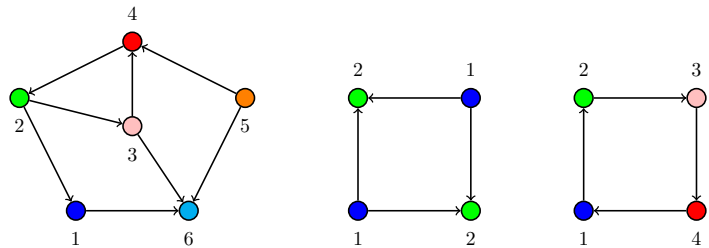

Equivalently, $G$ has an \textit{oriented $k$-colouring} if there exists a graph $H$ of order $k$ and a function $h: V(G)\rightarrow V(H)$ so that for all $(u,v) \in E(G)$, $(h(u),h(v)) \in E(H)$. Such a map $h$ is called an \textit{oriented homomorphism}. See Figure.2 for an example.

\begin{figure}[h!]\label{Fig: OHomomorphismExample}
\begin{center}
    \scalebox{0.85}{
        \begin{tikzpicture}[node distance={15mm}, thick, main/.style = {draw, circle}] 
\node[main][fill= blue] (8) at (6,-1) {}; 
	\node[fill=none] at (5.5,-1) (nodes) {$1$};
\node[main][fill= green] (9) at (6,0.5)  {}; 
	\node[fill=none] at (5.5,0.5) (nodes) {$2$};
\node[main][fill= blue]  (10) at (6,2) {}; 
	\node[fill=none] at (5.5,2) (nodes) {$1$};
\node[main][fill= orange] (11) at (7.5,-1) {}; 
	\node[fill=none] at (8,-1) (nodes) {$5$};
\node[main][fill= red] (12) at (7.5,0.5)  {}; 
	\node[fill=none] at (8,0.5) (nodes) {$4$};
\node[main][fill= pink] (13) at (7.5,2) {}; 
	\node[fill=none] at (8,2) (nodes) {$3$};
\draw  [->] (11) -- (8);
\draw  [->] (12) -- (8);
\draw  [->] (8) -- (13);
\draw  [->] (11) -- (9);
\draw  [->] (9) -- (12);
\draw  [->] (13) -- (9);
\draw  [->] (11) -- (10);
\draw  [->] (12) -- (10);
\draw  [->] (10) -- (13);

\draw[->] (8.25,0.5) -- (9.25,0.5);

\node[main][fill= blue] (1) at (10,1) {}; 
	\node[fill=none] at (9.5,1) (nodes) {$1$};
\node[main][fill= green] (2) at (10,0)  {}; 
	\node[fill=none] at (9.5,0) (nodes) {$2$};
\node[main][fill= pink]  (3) at (12,2) {}; 
	\node[fill=none] at (12.5,2) (nodes) {$3$};
\node[main][fill= orange] (4) at (12,-1) {}; 
	\node[fill=none] at (12.5,-1) (nodes) {$5$};
\node[main][fill= red] (5) at (12,0.5)  {};
	\node[fill=none] at (12.5,0.5) (nodes) {$4$};
\draw  [->] (1) -- (3);
\draw  [->] (4) -- (1);
\draw  [->] (5) -- (1);
\draw  [->] (3) -- (2);
\draw  [->] (4) -- (2);
\draw  [->] (2) -- (5);
        \end{tikzpicture}
    }
\end{center}
\caption{An example of an oriented homomorphism. Colours are depicted as numbers following the definition, as well as actual colours.}
\end{figure}
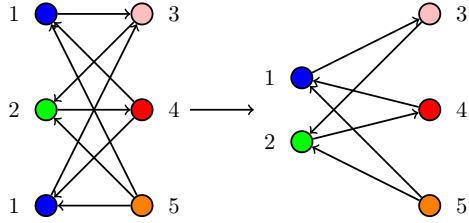

Meanwhile, the \textit{oriented chromatic number} of a graph $G$, denoted $\chi_o (G)$, or simply $\chi_o$ when the choice of $G$ is obvious, is the least integer $k$ so  that $G$ has an oriented $k$-colouring.

This parameter was first introduced and studied by Courcelle \cite{courcelle1994monadic} as a means to encode a graph orientation as a vertex labelling. Since its inception,  $\chi_o(G)$ has been extensively studied with the first major results coming from Raspaud and Sopena~\cite{raspaud1994good}, who proved $\chi_o(G) \leq \chi_a(G) 2^{\chi_a(G) -1}$ where $\chi_a(G)$ is the acyclic chromatic number of the underlying graph of $G$. As Borodin \cite{borodin1979acyclic} had previously shown the acyclic chromatic number of a planar graph is at most $5$, Raspaud and Sopena in fact proved that the oriented chromatic number of a planar graph is at most $80$. A bound that has not been improved in the nearly 30 years since its publication, despite many efforts to do so \cite{sopena2016homomorphisms}. Moreover, it is unknown if there exists a planar graph that requires more than $18$ colours in an oriented colouring \cite{marshall2007homomorphism,marshall2015oriented,sopena2002there,sopena2016homomorphisms}.

Bounding the oriented chromatic number in terms of the maximum degree, $\Delta=\Delta(G)$, was first considered by Sopena \cite{sopena1997chromatic} who showed $\chi_o(G) \leq (2\Delta -1)4^{\Delta-1}$. This was later improved by Kostochka, Sopena, and Zhu~\cite{kostochka1997acyclic} to $\chi_o(G) \leq 2\Delta^2 2^{\Delta}$, later being expanded on by Aravind and Subramanian~\cite{aravind2009forbidden} who showed $\chi_o(G) \leq 16 \Delta d 2^d$ where $d$ is the degeneracy of the graph is question. Recall that degeneracy $d(G)$ or simply $d$ when the choice of $G$ is obvious, is the smallest integer $k$ such that $\delta(H) \leq k$ for all subgraphs $H$ of $G$. Notably, there is no way to eliminate the maximum degree term in the previous upper bound entirely. For example, given any simple graph $G$, the oriented subdivision $H$ of $G$ given by subdividing every edge and orienting the new edges to form $2$-dipaths has $\chi_o(H) \geq \chi(G)$ despite $H$ being $2$ degenerate (see Figure.3). Interestingly, Wood \cite{wood2005acyclic} proved that if $\chi(G) \geq 9$, then $\chi_o(H) = \chi(G)$ where $H$ is the graph obtained from $G$ as before. This fact can be understood in the context of work by Dvo{\v{r}}{\'a}k \cite{dvovrak2008forbidden} who demonstrated that all parameters bounded above and below by the acyclic chromatic number (which the oriented chromatic number is \cite{kostochka1997acyclic, raspaud1994good}) have their values closely related to this type of subdivision.

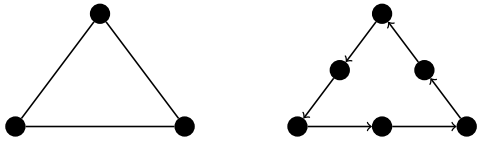
\begin{figure}[h!]\label{Fig: Subdivision}
\begin{center}
    \scalebox{0.75}{
        \begin{tikzpicture}[node distance={15mm}, thick, main/.style = {draw, circle}] 
        
\node[main][fill = black] (1) at (0,0) {};
\node[main][fill = black]  (2) at (3,0) {};
\node[main][fill = black]  (3) at (1.5,2) {};
\draw  (1) -- (2);
\draw  (1) -- (3);
\draw  (2) -- (3);

\node[main][fill = black] (1) at (5,0) {};
\node[main][fill = black] (4) at (6.5,0) {};
\node[main][fill = black]  (2) at (8,0) {};
\node[main][fill = black] (5) at (7.25,1) {};
\node[main][fill = black]  (3) at (6.5,2) {};
\node[main][fill = black] (6) at (5.75,1) {};
\draw  [->] (1) -- (4);
\draw  [->] (4) -- (2);
\draw  [->] (2) -- (5);
\draw  [->] (5) -- (3);
\draw  [->] (3) -- (6);
\draw  [->] (6) -- (1);

        \end{tikzpicture}
    }
\end{center}
\caption{A triangle and a directed subdivision of a triangle where the new edges all form $2$-dipaths.}
\end{figure}

More recently, the maximum degree bound was slightly improved for connected graphs to $\chi_o(G) \leq (\Delta-1)^2 2^{\Delta}+2$ by Das, Nandi,and Sen~\cite{das2017chromatic} in a more general context of connected $(m,n)$-colouring mixed graphs. In the same paper Das, Nandi, and Sen also show that if $G$ ha degeneracy strictly less than its maximum degree, then the plus $2$ term can be dropped implying $\chi_o(G) \leq (\Delta-1)^2 2^{\Delta}$. Attempts to lower this bound for small values of $\Delta$ have also seen some notable progress and remain an active area of research \cite{duffy2019oriented,dybizbanski2020oriented,sopena1996note}.

For a more complete picture of the literature with regard to oriented colouring we recommend Sopena's 2015 survey paper~\cite{sopena2016homomorphisms}.

First proposed by Chen and Wang \cite{min20062}, a \textit{$2$-dipath colouring} of a graph $G= (V,E)$ is a proper vertex colouring $c : V \rightarrow \mathbb{N}$ such that, if $(u,v),(v,w) \in E$, then $c(u) \neq c(w)$. Equivalently  $c : V \rightarrow \mathbb{N}$  is a $2$-dipath colouring if and only if $c$ is a proper colouring of $G^2$, the (directed) square of the graph $G$. Recall that $G$ is oriented and all the edges involved are directed. The $2$-dipath chromatic number of a graph $G$, denoted $\chi_2(G)$ or simply $\chi_2$ when our choice of $G$ is obvious, is the least integer $k$ such that $G$ admits a $2$-dipath $k$-colouring

It should be clear from this definition alone that $\chi_o(G)$ and $\chi_2(G)$ are related parameters. In particular, we can view $\chi_2(G)$ as a localized version of $\chi_o(G)$ as every oriented colouring is a $2$-dipath colouring and $2$-dipath colourings must only satisfy local constraints unlike oriented colourings. Perhaps the best example of this local versus global behaviour is that for all graphs $G$, $\chi_2(G) = \max\{\chi_2(C): C \text{ is a connected component in } G\}$ whereas there exist graphs $H$ such that $\chi_o(H) > \max\{\chi_o(C): C \text{ is a connected component in } H\}$. For an example of such a graph $H$, see Figure.4. This relationship between oriented and $2$-dipath colouring can be understood by a result of MacGillivray and Sherk \cite{macgillivray2014theory}, which characterise if a graph $G$ admits a $2$-dipath $k$ colouring based on the existence of an oriented homomorphism from $G$ to a given target graph. As a result of this, MacGillivray and Sherk \cite{macgillivray2014theory} achieve an upper bound for the oriented chromatic number in terms of the $2$-dipath chromatic number, namely $\chi_o(G) \leq 2^{\chi_2(G)}-1$.

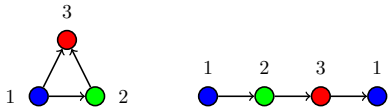
\begin{figure}[h!]\label{Fig: 2Dipath Not Oriented}
\begin{center}
    \scalebox{0.75}{
        \begin{tikzpicture}[node distance={15mm}, thick, main/.style = {draw, circle}] 
        
\node[main][fill = blue] (1) at (0,0) {};
	\node[fill=none] at (-0.5,0) (nodes) {$1$};
\node[main][fill = green]  (2) at (1,0) {};
	\node[fill=none] at (1.5,0) (nodes) {$2$};
\node[main][fill = red]  (3) at (0.5,1) {};
	\node[fill=none] at (0.5,1.5) (nodes) {$3$};
\draw  [->] (1) -- (2);
\draw  [->] (1) -- (3);
\draw  [->] (2) -- (3);

\node[main][fill = blue] (4) at (3,0) {};
	\node[fill=none] at (3,0.5) (nodes) {$1$};
\node[main][fill = green]  (5) at (4,0) {};
	\node[fill=none] at (4,0.5) (nodes) {$2$};
\node[main][fill = red]  (6) at (5,0) {};
	\node[fill=none] at (5,0.5) (nodes) {$3$};
\node[main][fill = blue] (7) at (6,0) {};
	\node[fill=none] at (6,0.5) (nodes) {$1$};
\draw  [->] (4) -- (5);
\draw  [->] (5) -- (6);
\draw  [->] (6) -- (7);
        \end{tikzpicture}
    }
\end{center}
\caption{A digraph with two components is depicted, along with a $2$-dipath colouring which is not an oriented colouring.}
\end{figure}

Our primary contribution is to obtain an improved bound for the oriented chromatic number in terms of the $2$-dipath chromatic number and degeneracy (Theorem~\ref{Thm: Oriented<= 2dipath}). As a corollary (Corollary~\ref{Corollary: Graphs with bounded degeneracy}) we show that in any class of graphs with bounded degeneracy, the orientated chromatic number of a graph $G$ in our class is $O(\chi_2(G)^{2+o(1)})$. Here the coefficient depends on the upper bound for degeneracy in a given class. Notably, our approach was used by the same author in \cite{bradshaw2023injective} to show that $\chi_o(G) = O(k \log k + g)$, where all graphs of genus $g$ have $\chi_2 \leq k$ and $g$ is the Euler genus of $G$. Corollary~\ref{Corollary: Graphs with bounded degeneracy} can be viewed as a generalisation of this result.

We obtain further results by improving the asymptotic bounds for the oriented chromatic number in terms of max degree and degeneracy. In particular, we are able to improve the prior bounds by a constant factor although our results do not apply to graphs with small maximum degree.

The paper is structured as follows. In Section~2 we give some important definitions for the rest of the paper and note some significant results from the literature. In Section~3, we give our improved bounds on the the oriented chromatic number in terms of maximum degree and degeneracy. In Section~4, we obtain our improved upper bound for the oriented chromatic number in terms of the $2$-dipath chromatic number and degeneracy. We conclude with a discussion of future work.

\section{Preliminaries}

We start by introducing some convenient terminology from \cite{aravind2009forbidden, das2017chromatic, kostochka1997acyclic}. Let $G = (V,E)$ be a fixed but arbitrary graph. Let $v \in V $ and let $A \subset N(v)$. Suppose the vertices of $G$ are ordered and let $A = \{u_1, u_2, \dots, u_{|A|}\}$. We define $F(A,v,G) \in \{-1,1\}^{|A|}$, or simply $F(A,v)$ when the choice of $G$ is obvious, to be the vector with the $i$-th entry equal to $1$ if $(v,u_i) \in E$ and equal to $-1$ if $(u_i,v) \in E$. The ordering on the vertices here has no significance beyond allowing us to define $F$.

We say a tournament is $(k,t)$-comprehensive if for all $U \subset V(T)$ where $|U| = k$ and any ${\vec a} \in \{-1,1\}^k$ there exist at least $t$ vertices $z \in V(T)$ where $F(U,z,T) = {\vec a}$. We note that Sopena in \cite{sopena1997chromatic} demonstrated a $(k,1)$-comprehensive graph of order $(k+1)2^k$ while Szekeres and Szekeres in \cite{szekeres1965problem} proved that every $(k,1)$-comprehensive graphs is order at least $(k+2)2^{k-1}-1$. Thus, the matter of determining the order of a smallest $(k,1)$-comprehensive graph is resolved up to a factor of $2$. For an example of a $(2,1)$-comprehensive tournament see Figure.~5.

As we will see however that matter of determining the order of smallest $(k,t)$-comprehensive graphs for $t>1$ is quite useful for oriented colouring and thus deserves its own consideration. Notably we are not the first authors to observe this, as $(k,t)$-comprehensive graphs have featured in many oriented colouring papers to date, although not necessarily by this name.

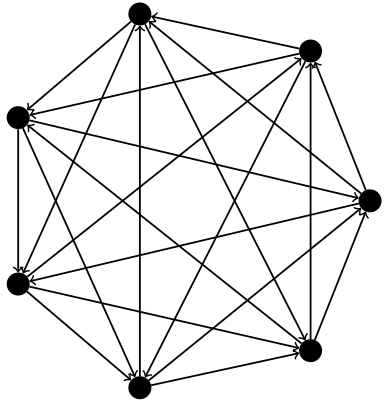
\begin{figure}\label{Fig:G(7;1,2,4)}
\begin{center}
    \scalebox{0.85}{
        \begin{tikzpicture}[node distance={15mm}, thick, main/.style = {draw, circle}] 

\node[main][fill= black] (0) at (2.8,0) {}; 
\node[main][fill= black] (1) at (1.870,2.345) {}; 
\node[main][fill= black] (2) at (-0.8,2.925) {}; 
\node[main][fill= black] (3) at (-2.703,1.302) {}; 
\node[main][fill= black] (4) at (-2.703,-1.302) {}; 
\node[main][fill= black] (5) at (-0.8,-2.925) {}; 
\node[main][fill= black] (6) at (1.870,-2.345) {}; 

\draw  [->] (0) -- (1);
\draw  [->] (1) -- (2);
\draw  [->] (2) -- (3);
\draw  [->] (3) -- (4);
\draw  [->] (4) -- (5);
\draw  [->] (5) -- (6);
\draw  [->] (6) -- (0);

\draw  [->] (0) -- (2);
\draw  [->] (2) -- (4);
\draw  [->] (4) -- (6);
\draw  [->] (6) -- (1);
\draw  [->] (1) -- (3);
\draw  [->] (3) -- (5);
\draw  [->] (5) -- (0);

\draw  [->] (0) -- (4);
\draw  [->] (4) -- (1);
\draw  [->] (1) -- (5);
\draw  [->] (5) -- (2);
\draw  [->] (2) -- (6);
\draw  [->] (6) -- (3);
\draw  [->] (3) -- (0);
        \end{tikzpicture}
    }
\end{center}
\caption{The Cayley graph $T:= Cayley(\mathbb{Z}/7\mathbb{Z};\{1,2,4\})$ which is a smallest $(2,1)$-comprehensive graph.}
\end{figure}

Similarly, it turns out to be convenient to consider well behaved orientations of complete $k$-partite graphs. Specifically, for positive integers $t$ and $k > 1$ an orientations of the complete $k$-partite graph $K:= K_{N,\dots, N} = (P_1,\dots, P_k, E)$ is \textit{$(k,t,N)$-full} if for all $i \in [k]$ and $A \subset \cup_{j\neq i} P_j$ of cardinality at most $t$ and vectors ${\vec a} \in \{-1,1\}^t$, there exists a vertex $v \in P_i$ where 
$$F(A,v,K) = {\vec a}.$$ Observe that a $(k,t,N)$-full graph has exactly $kN$ vertices.

\section{Graphs of Bounded Degree and Degeneracy}

\subsection{Graphs with Bounded Degree}

\begin{lemma}\label{Lemma: Comprehensive}
    If $T$ is a $(k,t)$-comprehensive graph where $k \geq 2$, then $T$ is a $(k-1,2t)$-comprehensive graph.
\end{lemma}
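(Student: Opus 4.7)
The plan is to reduce to the given $(k,t)$-comprehensive property by extending a size $k-1$ subset to a size $k$ subset with an arbitrary extra vertex, and combining the two resulting subsets.

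Concretely, fix an arbitrary subset $U' \subset V(T)$ with $|U'| = k-1$ and a target vector $\vec{b} \in \{-1,1\}^{k-1}$. I first pick any vertex $w \in V(T) \setminus U'$; such a vertex exists because the $(k,t)$-comprehensive property forces $|V(T)|$ to be much larger than $k$ (comfortably so for $t \geq 1$). Set $U \df U' \cup \{w\}$, so $|U| = k$, and consider the two vectors $\vec{a}^+, \vec{a}^- \in \{-1,1\}^k$ obtained from $\vec{b}$ by appending $+1$ respectively $-1$ in the coordinate indexing $w$.

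Next I apply the assumed $(k,t)$-comprehensive property of $T$ to each of $\vec{a}^+$ and $\vec{a}^-$. This yields sets $Z^+, Z^- \subset V(T) \setminus U$ with $|Z^+|, |Z^-| \geq t$ such that $F(U,z,T) = \vec{a}^+$ for all $z \in Z^+$ and $F(U,z,T) = \vec{a}^-$ for all $z \in Z^-$. The sets $Z^+$ and $Z^-$ are disjoint because the value of $F(U,z,T)$ at the coordinate of $w$ differs between them. Since $\vec{a}^+$ and $\vec{a}^-$ both restrict to $\vec{b}$ on the coordinates of $U'$, every $z \in Z^+ \cup Z^-$ satisfies $F(U',z,T) = \vec{b}$, and moreover $z \notin U \supseteq U'$.

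Thus $|Z^+ \cup Z^-| \geq 2t$ witnesses the $(k-1, 2t)$-comprehensive property for $U'$ and $\vec{b}$. The main (and really only) thing to be careful about is the book-keeping that ensures the witnessing vertices avoid $U'$ and that $Z^+$ and $Z^-$ are disjoint; both are immediate from the fact that they are forced to avoid the larger set $U = U' \cup \{w\}$ and that they disagree with each other on the $w$-coordinate. There is no combinatorial obstacle beyond this.
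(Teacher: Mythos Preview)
Your proof is correct and follows essentially the same approach as the paper: extend the $(k-1)$-set by one extra vertex, apply the $(k,t)$-comprehensive property to the two extensions of the target vector, and take the union of the witnesses. You are slightly more explicit than the paper about why $Z^+$ and $Z^-$ are disjoint, but the argument is the same.
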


\begin{proof}
    Suppose that $T$ is a $(k,t)$-comprehensive graph where $k\geq 2$. Let $A = \{v_1,\dots, v_{k-1}\} \subset V(T)$ be a fixed but arbitrary and let ${\vec a} = (x_1,\dots, x_{k-1}) \in \{-1,1\}^{k-1}$. Next, let $u \in V(T)\setminus A$ and $B = \{v_1,\dots, v_{k-1},u\}$. As $T$ is $(k,t)$-comprehensive there are at least $t$ vertices $z$ such that $F(B,z,T) = {\vec a}_+$, and at least $t$ vertices $w$ such that $F(B,w,T) = {\vec a}_-$, where ${\vec a}_+ = (x_1,\dots, x_{k-1},1)$ and ${\vec a}_- = (x_1,\dots, x_{k-1},-1)$. For any such $z$ or $w$, $F(A,z,T)=F(A,w,T) = {\vec a}$, so there are at least $2t$ vertices $z$ such that $F(A,z,T)= {\vec a}$. As our choice of $A$ and ${\vec a}$ was arbitrary we conclude that $T$ is $(k-1,2t)$-comprehensive as required.
\end{proof}

\begin{lemma}\label{Lemma: Target (k,t)}
Let $G$ be a graph of maximum degree $\Delta\geq 2$ with degeneracy $d \leq \Delta-1$. If $T$ is a $(\Delta-1, \Delta)$-comprehensive tournament, then $G$ has an oriented homomorphism to $T$.
\end{lemma}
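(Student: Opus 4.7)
The plan is to construct the oriented homomorphism $h\colon V(G)\to V(T)$ greedily along a degeneracy ordering $v_1,\dots,v_n$ of $V(G)$, so each $v_k$ has at most $|B_k|\le d\le\Delta-1$ back-neighbours $B_k=N(v_k)\cap\{v_1,\dots,v_{k-1}\}$. Inductively, suppose $h$ has been defined on $\{v_1,\dots,v_{k-1}\}$ and respects every arc between processed vertices. At step $k$, I would choose $h(v_k)=z\in V(T)$ subject to two families of constraints: an \emph{arc-orientation constraint} from each $v_j\in B_k$, requiring $(h(v_j),z)\in E(T)$ when $(v_j,v_k)\in E(G)$ and $(z,h(v_j))\in E(T)$ when $(v_k,v_j)\in E(G)$; and a \emph{look-ahead constraint} $z\ne h(v_j)$ for every already-coloured $v_j$ sitting at directed $2$-dipath distance from $v_k$ in $G$ through some intermediate vertex $v_i\notin B_k$.

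The role of the look-ahead is to pre-empt the only way a future induction step could fail. If at some later step $i$ the back-neighbourhood $B_i$ contained two vertices with equal $h$-image but arcs in opposite directions to $v_i$, the arc-orientation constraints at $v_i$ would force a two-cycle in the tournament $T$; constraint (b), applied at the later of the two vertices while $v_i$ was still unprocessed, is exactly what rules this scenario out. By contrast, no separate constraint is required when the middle of a $2$-dipath through $v_k$ already lies in $B_k$: the arc in $T$ between $z$ and $h(v_i)$ forced by (a), combined with the arc between $h(v_j)$ and $h(v_i)$ fixed at an earlier step, already prevents the collision $z=h(v_j)$ via tournament antisymmetry. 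Hence an arc-respecting extension into a tournament automatically collapses each directed $2$-dipath to two distinct vertices, giving a genuine oriented homomorphism.

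It remains to count candidates against forbidden colours. Iterating Lemma~\ref{Lemma: Comprehensive} shows that $T$ is $(|B_k|,\,2^{\Delta-1-|B_k|}\Delta)$-comprehensive, so the sign pattern required by (a) on the distinct images in $h(B_k)$ is realised by at least $2^{\Delta-1-|B_k|}\Delta$ vertices of $T$. The look-ahead constraints (b) rule out at most $(\Delta-|B_k|)(\Delta-1)$ colours, one per pair consisting of a forward-neighbour $v_i\in N(v_k)\setminus B_k$ and a further neighbour of $v_i$ distinct from $v_k$. A valid $z$ therefore exists whenever
\[
2^{\Delta-1-|B_k|}\Delta \;>\; (\Delta-|B_k|)(\Delta-1),
\]
which is tight at $|B_k|=\Delta-1$ and widens rapidly as $|B_k|$ decreases, since the left side doubles with each unit decrease while the right side grows only linearly. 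The main obstacle is precisely this bookkeeping for $2$-dipaths whose middle has not yet been processed: the greedy cannot detect them from its back-neighbourhood alone, and the second comprehensive parameter $\Delta$ is exactly what absorbs the resulting look-ahead load.
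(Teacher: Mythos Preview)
Your proposal is correct and follows essentially the same approach as the paper: greedy extension along a degeneracy ordering, using iterated Lemma~\ref{Lemma: Comprehensive} to lower-bound the candidates satisfying the arc constraints, and the same inequality $2^{\Delta-1-|B_k|}\Delta > (\Delta-|B_k|)(\Delta-1)$ to absorb the look-ahead load. Your look-ahead invariant is slightly weaker than the paper's (you only forbid collisions along directed $2$-dipaths through an unprocessed middle, whereas the paper keeps \emph{all} processed back-neighbours of each future vertex pairwise distinct), but since you overcount the forbidden colours to the same $(\Delta-|B_k|)(\Delta-1)$ this refinement does not affect the argument; the only point left implicit is the $|B_k|=0$ edge case, which the paper handles by noting $|V(T)|\ge 2^{\Delta-1}\Delta+1$ from $(1,2^{\Delta-2}\Delta)$-comprehensiveness.
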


\begin{proof}
Let $G=(V,E)$ and $T$ be as in the statement of the lemma. Let $v_1, v_2, \dots, v_n$ be a fixed degeneracy ordering of $V$. We define $h: V \rightarrow V(T)$ inductively on the degeneracy ordering of $V$ such that it will satisfy 
\begin{enumerate}
	\item $h|_{\{v_1, \dots, v_i\}}$ is a homomorphism from $G[\{v_1, \dots, v_i\}]$ to $T$, and
	\item for all $v_j$ where $j>i$, $h(v_r) \neq h(v_s)$ for all $v_r, v_s \in N(v_j) \cap \{v_1, \dots, v_i\}$.
\end{enumerate}
We can define $h(v_1)$ arbitrarily as it will satisfy (1) and (2) trivially. Suppose $h|_{\{v_1, \dots, v_i\}}$ is already defined and let $A= N(v_{i+1}) \cap \{v_1, \dots, v_i\}$. By definition of degeneracy $|A| \leq \Delta-1$ and by (2), for all distinct $v_r,v_s \in A$, $h(v_r) \neq h(v_s)$. This implies that we need not be concerned about $h(v_r)=h(v_s)$ where $v_r$ and $v_s$ have different orientations to $v_{i+1}$. By our choice of $T$ and Lemma~\ref{Lemma: Comprehensive} there are at least $2^{\Delta-1-|A|}\Delta$ vertices $z \in V(T)$ such that $F(h(A),z,T) = F(A,v_{i+1},T)$. As there are at most $(\Delta-|A|)(\Delta-1) < 2^{\Delta-1-|A|}\Delta$, for any value of $|A| \geq 1$, vertices $v_j$ where $j\leq i$ and $v_j$ has a common neighbour with $v_{i+1}$ in $\{v_{i+2}, \dots, v_n\}$, and as $T$ is order at least $1-\Delta+\Delta^2$ given $T$ is $(1,2^{\Delta-2}\Delta)$-comprehensive by Lemma~\ref{Lemma: Comprehensive}, it follows that there is a vertex $z \in V(T)$ such that for all these $v_j$, $h(v_j) \neq z$ and $F(h(A),z,T) = F(A,v_{i+1},T)$. Choose such a $z$ in $T$ and let  $h(v_{i+1}) = z$. Clearly, $h|_{\{v_1, \dots, v_i, v_{i+1}\}}$ satisfies (1) and (2). As the resulting mapping is a homomorphism, the lemma is proved.
\end{proof}

\begin{lemma}\label{Lemma: Chernoff's Bound}[\cite{motwani1995randomized} Theorem~4.2]
    Let $X$ be a random variable with binomial distribution $Bin(n,p)$ where $0<p<1$ and $0 < \delta \leq 1$. Then, $$\mathbb{P}(X < (1-\delta)\mu) \leq \exp(-\mu\delta^2/2)$$
    where $\mu := \mathbb{E}(X)$.
\end{lemma}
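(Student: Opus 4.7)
The statement is the standard multiplicative Chernoff lower tail bound for binomial random variables, so the plan is to follow the textbook moment-generating function argument rather than invent anything new.

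First I would apply an exponential Markov inequality in the negative direction. For any $s>0$,
\[
\mathbb{P}(X<(1-\delta)\mu)=\mathbb{P}\bigl(e^{-sX}>e^{-s(1-\delta)\mu}\bigr)\le e^{s(1-\delta)\mu}\,\mathbb{E}[e^{-sX}].
\]
Since $X\sim \mathrm{Bin}(n,p)$ decomposes as a sum of $n$ i.i.d.\ Bernoulli($p$) random variables, the moment generating function factors and one computes
\[
\mathbb{E}[e^{-sX}]=\bigl(1+p(e^{-s}-1)\bigr)^n\le e^{np(e^{-s}-1)}=e^{\mu(e^{-s}-1)},
\]
where the inequality $1+x\le e^x$ is the only place one loses anything at this stage.

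Next I would optimise the bound in $s$. Combining the two estimates gives
\[
\mathbb{P}(X<(1-\delta)\mu)\le \exp\!\bigl(\mu(e^{-s}-1)+s(1-\delta)\mu\bigr),
\]
and differentiating the exponent in $s$ shows the minimum occurs at $e^{-s}=1-\delta$, which is a legitimate positive value of $s$ precisely because $0<\delta\le 1$. Substituting yields the tight cumulant form
\[
\mathbb{P}(X<(1-\delta)\mu)\le \exp\!\bigl(-\mu\bigl[(1-\delta)\ln(1-\delta)+\delta\bigr]\bigr).
\]

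Finally I would deduce the clean bound $\exp(-\mu\delta^2/2)$ claimed in the lemma by a one-variable calculus estimate: it suffices to show that the function
\[
f(\delta):=(1-\delta)\ln(1-\delta)+\delta-\tfrac{1}{2}\delta^2
\]
is nonnegative on $[0,1]$. Checking $f(0)=f'(0)=0$ and $f''(\delta)=\delta/(1-\delta)\ge 0$ does this immediately. The only mildly delicate step is the optimisation in $s$ together with ensuring $s>0$, and the only analytic ingredient beyond Markov is the convexity estimate $f\ge 0$; everything else is bookkeeping, so I would expect the writeup to be short.
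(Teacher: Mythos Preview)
Your proof is correct and is exactly the standard moment-generating-function argument for the lower Chernoff tail; the only minor quibble is that at $\delta=1$ the optimiser $e^{-s}=1-\delta$ gives $s=\infty$, but that endpoint is trivial since $\mathbb{P}(X<0)=0$. The paper itself supplies no proof of this lemma: it is quoted verbatim from \cite{motwani1995randomized}, Theorem~4.2, and used as a black box, so there is nothing to compare against beyond noting that your argument is essentially the one found in that reference.
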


\begin{lemma}\label{Lemma: d<=k-1}
Let $\epsilon>0$ be a fixed constant. There there exist an integer $N$ depending only on $\epsilon$ such that for all $k \geq N$, there exists a $(k-1, k)$-comprehensive tournament of order $\lceil (\ln 2 + \epsilon)k^2 2^k \rceil$.
\end{lemma}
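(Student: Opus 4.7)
The plan is to use a standard probabilistic argument based on a uniformly random tournament. Set $n = \lceil (\ln 2 + \epsilon) k^2 2^k \rceil$ and let $T$ be the tournament on $n$ vertices obtained by orienting each edge independently with probability $1/2$ in each direction. For a fixed pair $(U, \vec a)$ with $U \subset V(T)$, $|U| = k-1$, and $\vec a \in \{-1,1\}^{k-1}$, I will let $X_{U,\vec a}$ be the number of vertices $z \in V(T) \setminus U$ with $F(U,z,T) = \vec a$. Because the orientations of the $k-1$ edges incident to any fixed $z$ are independent and each matches $\vec a$ with probability $2^{-(k-1)}$, we have $X_{U,\vec a} \sim \mathrm{Bin}(n - k + 1, 2^{-(k-1)})$ with mean $\mu = (n-k+1)/2^{k-1} = (2+o(1))(\ln 2 + \epsilon) k^2$, which is much larger than $k$ for large $k$.

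Next, I would apply Lemma~\ref{Lemma: Chernoff's Bound} with $\delta = 1 - k/\mu \in (0,1)$ to bound
\[
    \mathbb{P}(X_{U,\vec a} < k) = \mathbb{P}(X_{U,\vec a} < (1-\delta)\mu) \leq \exp(-\mu\delta^2/2).
\]
Since $\delta \to 1$ as $k \to \infty$, the right-hand side is at most $\exp\bigl(-(\ln 2 + \epsilon)(1 - o(1))k^2\bigr)$. A union bound over all pairs $(U, \vec a)$ then involves a factor of at most $\binom{n}{k-1} 2^{k-1} \leq n^{k-1} 2^{k-1}$, whose logarithm works out to
\[
    (k-1)\bigl(k\ln 2 + 2\ln k + \ln(\ln 2 + \epsilon) + \ln 2\bigr) = k^2 \ln 2 + O(k \ln k).
\]
Combining the two estimates, the logarithm of the union bound over all bad events is at most $k^2 \ln 2 + O(k \ln k) - (\ln 2 + \epsilon)(1 - o(1))k^2 = -\epsilon k^2 + O(k \ln k)$, which tends to $-\infty$ as $k \to \infty$.

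Therefore, for all $k$ at least some threshold $N = N(\epsilon)$, the probability that some pair $(U, \vec a)$ fails is strictly less than $1$, so there exists a tournament on $n$ vertices that is $(k-1, k)$-comprehensive, as required. The main obstacle is tuning the constant $\ln 2 + \epsilon$: the $2^{k^2}$-type growth of the union bound (coming from $\binom{n}{k-1} 2^{k-1}$) must be exactly dominated by the exponential decay $e^{-\mu/2}$ supplied by Chernoff, and it is precisely the quadratic factor $k^2$ inside $n$ that forces the leading exponents of $k^2 \ln 2$ from both contributions to cancel, leaving the positive $\epsilon k^2$ slack needed to swallow the $O(k \ln k)$ correction terms. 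No step requires more than routine asymptotic bookkeeping once this balance is set up.
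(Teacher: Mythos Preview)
Your argument is correct and follows essentially the same route as the paper: a uniformly random tournament on $n=\lceil(\ln 2+\epsilon)k^2 2^k\rceil$ vertices, the Chernoff bound of Lemma~\ref{Lemma: Chernoff's Bound} applied with $\delta=1-k/\mu$, and a union bound over all $\binom{n}{k-1}2^{k-1}$ pairs $(U,\vec a)$, with the constant $\ln 2+\epsilon$ chosen so that the $k^2\ln 2$ contribution from the union bound is beaten by the $-(\ln 2+\epsilon)k^2$ from Chernoff. The only difference is cosmetic: you package the computation in asymptotic shorthand where the paper writes out the intermediate inequalities explicitly.
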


\begin{proof}
Let $T = (V,E)$ be a random orientation of the complete graph on $n$ vertices such that each edge is assigned an orientation uniformly and independently. We will choose the value of $n$ later. Let ${\vec a} \in \{-1,1\}^{k-1}$ and $U \subset V$ such that $|U| = k-1$ be fixed but arbitrary. Let $X_{U,{\vec a}}$ be the random variable which counts the number of vertices $z \in V\setminus U$ such that $F(U,z,T) = {\vec a}$. It follows that the expectation $\mu := \mathbb{E}(X_{U,{\vec a}}) = (n-k+1)2^{1-k}$.

Letting $\delta = 1- \frac{k}{\mu}$, observe that Lemma~\ref{Lemma: Chernoff's Bound} implies 
\begin{align*}
    \mathbb{P}(X_{U,{\vec a}} < (1-\delta)\mu) = \mathbb{P}(X_{U,{\vec a}} < k) \leq \exp(-\mu\delta^2/2) = \exp(k - \frac{\mu}{2}-\frac{k^2}{2\mu}) 
\end{align*}
whenever $0<\delta \leq 1$. 

Now let $n = (\ln 2 + \epsilon)k^2 2^k $. Then, 
\begin{align*}
    \mu = ( (\ln 2 + \epsilon)k^2 2^k  - k +1)2^{1-k} = 2(\ln 2 + \epsilon)k^2 + \frac{1-k}{2^{k-1}}
\end{align*}
implying that for $k \geq 2$, $0< \delta = 1-\frac{k}{\mu} \leq 1$ as required to apply Lemma~\ref{Lemma: Chernoff's Bound}. Next observe that this implies for $k \geq 2$, $$2(\ln 2 + \epsilon)k^2 - 1 \leq \mu \leq 2(\ln 2 + \epsilon)k^2+1.$$
Hence, we can conclude that for $k \geq 2$
\begin{align*}
    \mathbb{P}(X_{U,{\vec a}} < k) \leq \exp(1/2 + k - (\ln 2 + \epsilon)k^2 -\frac{k^2}{4(\ln 2 + \epsilon)k^2+2}) \\
    = \exp((o(1)-\ln2 - \epsilon)k^2)
\end{align*}
where the $o(1)$ is a function of $k$. Applying the union bound over all choices of $U$ and $\vec{a} \in \{-1,1\}^{k-1}$, we arrive at the following inequalities $k \geq 5$,
\begin{align*}
    \mathbb{P}(\exists U,\vec{a}, \textit{ such that } X_{U,\vec{a}}<k+1) \leq \binom{n}{k-1}2^{k-1}\mathbb{P}(X_{U,{\vec a}} < k+1)\\
    \leq  \binom{n}{k-1}2^{k-1}\exp(-(\ln2 + \epsilon -o(1))k^2)\\
    \leq n^k \exp((o(1)-\ln2 - \epsilon)k^2) \\
    \leq  ((\ln 2 + \epsilon)k^2 2^k+1)^{k} \exp((o(1)-\ln2 - \epsilon)k^2) \\
    = \exp((\ln2+o(1))k^2+(o(1)-\ln2 - \epsilon)k^2)\\
    = \exp((o(1)-\epsilon)k^2)\rightarrow 0
\end{align*}
as $k \rightarrow \infty$, given the $o(1)$ is again in terms of $k$. Furthermore, note that all the functions captured by the $o(1)$ are monotone when $k \geq 5$, so we conclude that there exists an integer $N$ depending on $\epsilon$ such that for all $k \geq N$, $\mathbb{P}(\exists U,\vec{a}, \textit{ such that } X_{U,\vec{a}}<k)< 1$. 

Therefore, there exists an $N$ such that for all $k \geq N$, there is a positive probability that $T$ is $(k-1, k)$-comprehensive. Hence, for all $k \geq N$ there must exist a $(k-1, k)$-comprehensive tournament as desired.
\end{proof}

\begin{theorem}\label{Thm: Oriented <= MaxDegree}
Let $\epsilon>0$ be a fixed constant. There exists an integer $N$ depending only on $\epsilon$ such that for all $k \geq N$
\begin{enumerate}
    \item if $G$ is a graph with $d(G) < k$ and $\Delta(G) \leq k$, then $\chi_o(G) \leq \lceil (\ln2 +\epsilon)k^2 2^k \rceil$, and
    \item if $G$ is a connected graph with $\Delta(G) \leq k$, then $\chi_o(G) \leq \lceil (\ln2 +\epsilon)k^2 2^k \rceil+2$, and 
    \item if $G$ is a graph with $\Delta(G) \leq k$, $\chi_o(G) \leq 2\lceil (\ln2 +\epsilon)(k+1)^2 2^k\rceil$.
\end{enumerate}
\end{theorem}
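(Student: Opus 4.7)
My plan is to prove the three parts in the order (1), (3), (2). Part (1) is a refinement of Lemma~\ref{Lemma: Target (k,t)}, part (3) follows from (1) by a one-line padding argument, and part (2) is the most delicate, using two extra colours to handle a single problematic vertex in the $k$-regular case. Throughout I write $m(k) := \lceil(\ln 2 + \epsilon)k^2 2^k\rceil$ and let $T$ denote the $(k-1,k)$-comprehensive tournament of order $m(k)$ produced by Lemma~\ref{Lemma: d<=k-1}, enlarging the threshold $N$ by at most $1$ so that the lemma applies with both $k$ and $k+1$.

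For \textbf{(1)}, the hypothesis $d(G) < k$ gives a degeneracy ordering $v_1,\ldots,v_n$ in which $|A_i| := |N(v_{i+1}) \cap \{v_1,\ldots,v_i\}| \leq k-1$ at every step. I rerun the inductive construction of Lemma~\ref{Lemma: Target (k,t)}, substituting $k$ for $\Delta$ throughout the counts: by iterated Lemma~\ref{Lemma: Comprehensive}, $T$ is $(|A_i|, 2^{k-1-|A_i|}k)$-comprehensive, so at least $2^{k-1-|A_i|}k$ images satisfy the orientation profile, while at most $(k-|A_i|)(k-1)$ already-placed vertices need to be avoided; the inequality $(k-|A_i|)(k-1) < 2^{k-1-|A_i|}k$ is easy for every $|A_i| \in \{0,\ldots,k-1\}$ (tight only at $|A_i| = k-1$, reading $k-1 < k$), so a valid image always exists. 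For \textbf{(3)}, I view $G$ as having maximum degree at most $k+1$, so $d(G) \leq \Delta(G) \leq k = (k+1)-1$; applying Lemma~\ref{Lemma: d<=k-1} with $k+1$ in place of $k$ gives a $(k,k+1)$-comprehensive tournament of order $m(k+1) \leq 2\lceil(\ln 2 + \epsilon)(k+1)^2 2^k\rceil$, and rerunning the (1)-argument with $k+1$ in place of $k$ delivers the required homomorphism and hence the bound.

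For \textbf{(2)}, let $G$ be connected with $\Delta(G) \leq k$. If some vertex $w$ has degree strictly less than $k$, then reverse BFS rooted at $w$ gives every non-root vertex a BFS parent later in the order (hence back-neighbourhood of size at most $k-1$), and $w$ itself has back-neighbourhood of size $\deg(w) \leq k-1$; thus $d(G) \leq k-1$ and (1) finishes with $m(k) \leq m(k)+2$ colours. Otherwise $G$ is $k$-regular. Pick any $v$ and set $G' := G-v$; since $G$ is connected, every component of $G'$ contains a former neighbour of $v$ of degree at most $k-1$ in $G'$, so componentwise reverse BFS gives $d(G') \leq k-1$, and (1) provides an oriented colouring $c\colon V(G') \to [m(k)]$. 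I extend $c$ to $V(G)$ by setting $c(v) := m(k)+1$; because this colour is unique, every oriented-colouring constraint is automatic except the cross-neighbourhood condition $c(x) \neq c(y)$ for $x \in N^-_G(v)$, $y \in N^+_G(v)$. \textbf{The main obstacle} is guaranteeing this condition. My plan is to engineer $c$ by processing $N_G(v)$ early in the degeneracy ordering used for $G'$, and using the second extra colour $m(k)+2$ as a single reserve image that can be assigned to at most one neighbour of $v$, forcing $c(N^-_G(v))$ and $c(N^+_G(v))$ into disjoint colour classes. Verifying that this reassignment does not destroy the inductive invariants of Lemma~\ref{Lemma: Target (k,t)} for the rest of $G'$, and that a single reserve always suffices, will be the technical heart of the argument.
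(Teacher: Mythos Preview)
Your arguments for (1) and (3) are essentially the paper's: apply Lemma~\ref{Lemma: Target (k,t)} (or rerun its proof) together with Lemma~\ref{Lemma: d<=k-1}, and for (3) shift $k \mapsto k+1$ so that $d(G)\le k<k+1$.

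For (2) you are working much harder than necessary, and the part you flag as ``the technical heart'' is a genuine gap. Deleting a \emph{vertex} $v$ leaves you with the cross-neighbourhood $2$-dipath constraint at $v$, and your plan to repair it---process $N_G(v)$ early and hold one colour in reserve---is not justified as stated: a priori several pairs $(x,y)\in N^-(v)\times N^+(v)$ can collide under $c$, recolouring a single vertex need not resolve them all, and recolouring several neighbours to the same reserve colour $m(k)+2$ can itself violate the oriented-colouring condition elsewhere in $G'$. To make your route work you would have to fold the ghost constraints coming from $v$ into the inductive counting of Lemma~\ref{Lemma: Target (k,t)} and verify that the extra forbidden colours still fit under $2^{\,k-1-|A|}k$ at every step; that may be feasible but you have not done it, and it is exactly what you identify as unfinished.

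The paper sidesteps all of this by deleting an \emph{edge} $e=(u,v)$ rather than a vertex. Then $H=G-e$ still has $\Delta(H)=k$ but now $d(H)<k$ (both endpoints have degree $k-1$, and $G$ connected $k$-regular means $H$ is no longer regular), so part (1) yields an oriented colouring $\phi_0$ of $H$. One then sets $\phi(u)=m(k)+1$, $\phi(v)=m(k)+2$, and $\phi=\phi_0$ elsewhere. Because each of the two new colours is used exactly once in all of $G$, every oriented-colouring constraint that involves $u$ or $v$ is either vacuous or reduces to a constraint on vertices other than $u,v$, which $\phi_0$ already satisfies on $H$. No engineering of the ordering, no reserve bookkeeping, no cross-neighbourhood analysis. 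This edge-deletion trick is what you are missing.
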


\begin{proof}
Let $\epsilon>0$ be a fixed constant and let $N$ be the least integer that guarantees for all $k \geq N$, there exists a $(k-1,k+1)$-comprehensive tournament of order $\lceil (\ln2 +\epsilon)k^2 2^k\rceil$. Note that such an $N$ exists by Lemma~\ref{Lemma: d<=k-1}.

\underline{Case.1:} $G$ is a graph with $d(G) < k$ and $\Delta(G) \leq k$. Then Lemma~\ref{Lemma: Target (k,t)} implies that $G$ has an oriented homomorphism to all $(k-1,k+1)$-comprehensive tournaments $T$. By the choice of $k\geq N$, there exists a  $(k-1,k+1)$-comprehensive tournament of order $\lceil (\ln2 +\epsilon)k^2 2^k\rceil$. Hence, $\chi_o(G) \leq \lceil (\ln2 +\epsilon)k^2 2^k\rceil$ as desired.

\underline{Case.2:} $G$ is a connected graph with $\Delta(G) \leq D$. Then either $d(G) < k-1$, or $d(G) = \Delta(G) = k$ . In the former case the result follows by Case.1. Suppose then that $G$ is $d(G) = \Delta(G) = k$. It is well known that a connected graph has $d= \Delta$ if and only if it is $\Delta$-regular. Thus, $G$ is $\Delta$-regular. Let $e = (u,v) \in E$ be fixed but arbitrary and let $H = G-e$. Then, $d(H) < \Delta(H) = \Delta(G) = k$. By the argument in Case 1, there is an oriented colouring $\phi_0: V(H) \rightarrow \{1,\dots ,\lceil (\ln2 +\epsilon)k^2 2^k \rceil\}$ of $H$. Let $\phi$ be a vertex colouring of $G$ such that for all $w \neq u,v$, $\phi(w) = \phi_0(w)$, $\phi(u) = \lceil (\ln2 +\epsilon)k^2 2^k \rceil+1$, and $\phi(v) = \lceil (\ln2 +\epsilon)k^2 2^k \rceil +2$. As $\phi_0$ is an oriented colouring it is easy to verify that $\phi$ is also an oriented $\lceil (\ln2 +\epsilon)k^2 2^k \rceil+2$-colouring. Hence,  $\chi_o(G) \leq \lceil (\ln2 +\epsilon)k^2 2^k\rceil+2$ as desired.

\underline{Case.3:} $G$ is a graph with $\Delta(G) \leq k$. If $d(G) < k$, then we are in Case.1, thus, $d(G) = \Delta(G) = k$. Now, by Lemma~\ref{Lemma: d<=k-1} there exists a $(k,k+2)$-comprehensive tournament of order $2\lceil (\ln2 +\epsilon)(k+1)^2 2^k\rceil$. Then by Lemma~\ref{Lemma: (k,t)-full target} $G$ has an oriented homomorphism to this tournament, which implies $\chi_o(G) \leq 2\lceil (\ln2 +\epsilon)(k+1)^2 2^k\rceil$ as desired.
 \end{proof}

We note that by examining the bounds in Lemma~\ref{Lemma: d<=k-1} it can be observed that letting $k \geq 22$ is sufficient to let $\epsilon = 3/10$ which improves the known coefficients from \cite{das2017chromatic,kostochka1997acyclic} of $1$ (for cases 1 and 2) and $2$ (for case 3). However this improvement is somewhat marginal, so it is natural to ask how large $k$ must be to reach a more significant improvement. Unfortunately the functions from Lemma~\ref{Lemma: d<=k-1} grow so fast that verifying this is non-trivial. See Table~\ref{Table: S2.1 coefficient} for a short list of smallest values $k$ where we may apply a given natural choices of $\epsilon$, which we verified using a computer.

\vspace{0.5cm}
\begin{table}[!h]\label{Table: S2.1 coefficient}
\centering
\begin{tabular}{| c | c |} 
\hline 
\makecell{\textit{Coefficient}} & \makecell{$k \geq$}\\ \hline
$\ln2 + 1$ & $4$ \\ \hline
$\ln2 + 1/2$ & $11$ \\ \hline
$\ln2 + 2/5$ & $15$ \\ \hline
$\ln2 + 3/10$ & $22$ \\ \hline
$\ln2 + 11/40$ & $25$ \\ \hline
$\ln2 + 1/4$ & $28$ \\ \hline
\hline 
\end{tabular}
\caption{Coefficients for Theorem~\ref{Thm: Oriented <= MaxDegree} in the first column with the smallest $k$ such that these coefficients can be used appearing in the second column.}
\end{table}

\subsection{Graphs with Bounded Degeneracy}

\begin{lemma}\label{Lemma: Target (k,t)2}
Let $G$ be a graph of maximum degree $\Delta$ and degeneracy $d$. If $T$ is a $(d, d\Delta)$-comprehensive tournament, then $G$ has an oriented homomorphism to $T$.
\end{lemma}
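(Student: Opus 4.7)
The plan is to mirror the inductive argument of Lemma~\ref{Lemma: Target (k,t)}, but with a sharper count of forbidden colours that uses the degeneracy of $G$ at both the current and the future vertices. Fix a degeneracy ordering $v_1,\dots,v_n$ of $V(G)$ and extend $h:V(G)\to V(T)$ one vertex at a time along this ordering, maintaining the same two invariants as in Lemma~\ref{Lemma: Target (k,t)}: (1) the restriction of $h$ to $\{v_1,\dots,v_i\}$ is a homomorphism to $T$, and (2) for every $v_j$ with $j>i$, the images $h(v_r)$ over $v_r\in N(v_j)\cap\{v_1,\dots,v_i\}$ are pairwise distinct.

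At the inductive step, let $A=N(v_{i+1})\cap\{v_1,\dots,v_i\}$, so that $|A|\leq d$ by the degeneracy ordering and $|h(A)|=|A|$ by invariant (2). I must choose $z=h(v_{i+1})$ satisfying $F(h(A),z,T)=F(A,v_{i+1},G)$ (to preserve (1)) and $z\neq h(v_r)$ for every $v_r$ sharing a future common neighbour with $v_{i+1}$ (to preserve (2)). Combining the hypothesis that $T$ is $(d,d\Delta)$-comprehensive with $d-|A|$ applications of Lemma~\ref{Lemma: Comprehensive} gives at least $2^{d-|A|}d\Delta$ candidate values of $z$ for the $F$-condition.

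The crux is bounding the forbidden colours. Each such colour comes from a pair $(w,v_r)$ where $w\in N(v_{i+1})\cap\{v_{i+2},\dots,v_n\}$ is a later neighbour of $v_{i+1}$ and $v_r\in N(w)\cap\{v_1,\dots,v_i\}$. There are at most $\Delta-|A|$ choices of $w$, and for each one the degeneracy ordering applied to $w$ forces at most $d$ of $w$'s neighbours to precede $w$; since $v_{i+1}$ is already one of them, at most $d-1$ can lie in $\{v_1,\dots,v_i\}$. Hence the forbidden set has size at most $(\Delta-|A|)(d-1)$, and the elementary inequality $2^{d-|A|}d\Delta>(\Delta-|A|)(d-1)$, which holds for all $0\leq|A|\leq d$ and $\Delta\geq 1$, produces the required $z$.

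The main thing to keep straight is that the degeneracy bound must be applied to the future vertex $w$ (to control the size of $w$'s earlier neighbourhood) rather than to $v_{i+1}$; this is precisely the sharper accounting that replaces the $(\Delta-1)$-factor from Lemma~\ref{Lemma: Target (k,t)} with the $(d-1)$-factor here, and it is what makes the hypothesis $(d,d\Delta)$-comprehensive (rather than a much stronger condition) sufficient for the induction to close.
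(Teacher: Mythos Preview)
Your proof is correct and follows essentially the same approach as the paper's: induct along a degeneracy ordering while maintaining the same two invariants, bound the forbidden colours by $(\Delta-|A|)(d-1)$ via the degeneracy of the future common neighbour $w$, and compare with the supply of candidates guaranteed by comprehensiveness. The only cosmetic difference is that you invoke Lemma~\ref{Lemma: Comprehensive} explicitly to extract the $2^{d-|A|}d\Delta$ candidate bound, whereas the paper contents itself with the weaker bound $d\Delta$ (still enough since $(\Delta-|A|)(d-1)<d\Delta$) and only appeals to Lemma~\ref{Lemma: Comprehensive} separately to dispose of the degenerate case $|A|=0$.
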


\begin{proof}
Let $G=(V,E)$ and $T$ be as in the statement of the lemma. Let $v_1, v_2, \dots, v_n$ be a fixed degeneracy ordering of $V$. We define $h: V \rightarrow V(T)$ inductively on the degeneracy ordering of $V$ such that it will satisfy 
\begin{enumerate}
	\item $h|_{\{v_1, \dots, v_i\}}$ is a homomorphism from $G[\{v_1, \dots, v_i\}]$ to $T$, and
	\item for all $v_j$ where $j>i$, $h(v_r) \neq h(v_s)$ for all $v_r, v_s \in N(v_j) \cap \{v_1, \dots, v_i\}$.
\end{enumerate}
We can define $h(v_1)$ arbitrarily as it will satisfy (1) and (2) trivially. Suppose $h|_{\{v_1, \dots, v_i\}}$ is already defined and let $A= N(v_{i+1}) \cap \{v_1, \dots, v_i\}$. By definition of degeneracy $|A| \leq d$ and by (2), for all distinct $v_r,v_s \in A$, $h(v_r) \neq h(v_s)$. This implies that we need not be concerned about $h(v_r)=h(v_s)$ where $v_r$ and $v_s$ have different orientations to $v_{i+1}$. By our choice of $T$ there are at least $d\Delta $ vertices $z \in V(T)$ such that $F(h(A),z,T) = F(A,v_{i+1},T)$. As there are at most $(\Delta-|A|)(d-1) < d\Delta$ vertices $v_j$ where $j\leq i$ and $v_j$ has a common neighbour with $v_{i+1}$ in $\{v_{i+2}, \dots, v_n\}$, and as $T$ is order at least $1-\Delta+d\Delta$ given $T$ is $(1,2^{d-1}d\Delta)$-comprehensive by Lemma~\ref{Lemma: Comprehensive}, it follows that there is a vertex $z \in V(T)$ such that for all these $v_j$, $h(v_j) \neq z$ and $F(h(A),z,T) = F(A,v_{i+1},T)$. Choose such a $z$ in $T$ and let  $h(v_{i+1}) = z$. Clearly, $h|_{\{v_1, \dots, v_i, v_{i+1}\}}$ satisfies (1) and (2). As the resulting mapping is a homomorphism, the lemma is proved.
\end{proof}

\begin{lemma}\label{Lemma: (d,kd)-comprehensive}
Let  $\alpha: \mathbb{N} \rightarrow (0,1]$ be a monotone function such that $\alpha(k)k^2 \rightarrow \infty$ as $k\rightarrow \infty$. There exists an integer $N$ depending on $\alpha$ such that for all $k \geq N$, there is a $(\alpha(k)k, \alpha(k) k^2)$-comprehensive tournament of order $\lceil (2\alpha(k)\ln2+2) \alpha(k) k^2 2^{\alpha(k) k} \rceil$.
\end{lemma}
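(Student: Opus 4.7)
The plan is to imitate the random tournament argument from the proof of Lemma~\ref{Lemma: d<=k-1}, adapting all parameters to the new regime. Write $\alpha \df \alpha(k)$ for brevity, let $n \df \lceil (2\alpha\ln 2 + 2)\alpha k^2 2^{\alpha k}\rceil$, and let $T$ be a uniformly random tournament on $n$ vertices, each edge oriented independently with probability $1/2$ (rounding $\alpha k$ to an integer where necessary). For every $U \subseteq V(T)$ of size $\alpha k$ and every $\vec{a} \in \{-1,1\}^{\alpha k}$, the count $X_{U,\vec{a}}$ of $z \in V(T) \setminus U$ satisfying $F(U,z,T) = \vec{a}$ is distributed $\mathrm{Bin}(n - \alpha k,\, 2^{-\alpha k})$, so its mean $\mu$ equals $\beta\alpha k^2$ up to an additive $O(1)$, where $\beta \df 2\alpha\ln 2 + 2$.

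Next, apply Lemma~\ref{Lemma: Chernoff's Bound} with threshold $t \df \alpha k^2$ and deviation $\delta \df 1 - t/\mu$; since $\beta \geq 2$ one checks $\delta \in (0,1]$ for $k$ large, and the Chernoff exponent obeys $\mu\delta^2/2 \sim (\beta-1)^2 \alpha k^2/(2\beta)$. Follow this with a union bound over the $\binom{n}{\alpha k}2^{\alpha k}$ pairs $(U,\vec{a})$; the dominant contribution to the log-count comes from the $2^{\alpha k}$ factor inside $n$, giving a union-bound exponent asymptotic to $(\ln 2)\alpha^2 k^2$.

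The crucial algebraic point is the identity $(2\gamma+1)^2 - 4\gamma(\gamma+1) = 1$, with $\gamma \df \alpha\ln 2$, which gives
\begin{align*}
\frac{\mu\delta^2}{2} - (\ln 2)\alpha^2 k^2 \;\sim\; \alpha k^2\left(\frac{(2\gamma+1)^2}{4(\gamma+1)} - \gamma\right) \;=\; \frac{\alpha k^2}{4(\gamma+1)} \;\geq\; \frac{\alpha k^2}{4(1 + \ln 2)}.
\end{align*}
By hypothesis $\alpha k^2 \to \infty$, so the total failure probability tends to $0$; the probabilistic method then produces an $(\alpha k, \alpha k^2)$-comprehensive tournament of the required order, and monotonicity of $\alpha$ promotes this pointwise conclusion to a single integer $N$ that works for all $k \geq N$.

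The main obstacle is that the calculation is tight: the coefficient $2\alpha\ln 2 + 2$ is essentially the smallest that allows the Chernoff exponent to beat the union-bound exponent, with the surplus collapsing exactly to $1/(4(\gamma+1))$ via the identity above. Care is needed to verify that the sub-leading contributions (the ceiling in the definition of $n$, the $\log k$ and $\log\alpha$ terms in $\log n$, and the approximation $\mu \approx \beta\alpha k^2$) are uniformly $o(\alpha k^2)$ for $\alpha \in (0,1]$, so that the implicit $o(1)$ terms do not degrade as $\alpha \to 0$. Monotonicity of $\alpha$ is then what converts the pointwise limiting statement into a uniform threshold $N$ depending only on $\alpha$.
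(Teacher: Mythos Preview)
Your proposal is correct and follows essentially the same route as the paper: a uniformly random tournament on $n=\lceil(2\alpha\ln 2+2)\alpha k^2 2^{\alpha k}\rceil$ vertices, Chernoff applied to each $X_{U,\vec a}$ with threshold $\alpha k^2$, and a union bound over all $(U,\vec a)$, with the surplus collapsing to $\alpha k^2/(4(\alpha\ln 2+1))\ge \alpha k^2/(4(\ln 2+1))$. The only cosmetic difference is that you package the cancellation via the identity $(2\gamma+1)^2-4\gamma(\gamma+1)=1$, whereas the paper expands the Chernoff exponent as $\alpha k^2-\mu/2-\alpha^2k^4/(2\mu)$ and simplifies term by term; the resulting constant $1/(4\ln 2+4)$ is identical.
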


\begin{proof}
Let $\alpha: \mathbb{N} \rightarrow \mathbb{R}^+$ be a bounded monotone function such that $\alpha(k)k^2 \rightarrow \infty$ as $k\rightarrow \infty$. As in Lemma~\ref{Lemma: d<=k-1} we suppose $T$ is a random tournament, let  $U\subseteq V$ with $|U| = \alpha(k) k$ be fixed but arbitrary. Let $X_{U,{\vec a}}$ be the random variable which counts the number of vertices $z \in V\setminus U$ such that $F(U,z,T) = {\vec a}$ where ${\vec a} \in \{-1,1\}^{\alpha(k) k}$. Then $\mu := \mathbb{E}(X_{U,{\vec a}}) = (n-\alpha(k) k)2^{-\alpha(k) k}$.

Applying Lemma~\ref{Lemma: Chernoff's Bound} with $\delta = 1 - \frac{\alpha(k) k^2}{\mu}$ we see that 
\begin{align*}
\mathbb{P}(X_{U,{\vec a}} < (1-\delta)\mu) = \mathbb{P}(X_{U,{\vec a}} < \alpha(k) k^2) \leq \exp(-\mu\delta^2/2)\\
= \exp(\alpha(k)k^2 - \frac{\alpha^2(k)k^4}{2\mu} - \frac{\mu}{2})
\end{align*}
whenever $0<\delta \leq 1$. Let $n = (2\alpha(k)\ln2+2)\alpha(k) k^2 2^{\alpha(k) k}$, then 
$$
\mu =  (2\alpha(k)\ln2+2)\alpha(k) k^2 - \frac{\alpha(k) k}{2^{\alpha(k) k}}
$$
which, given our assumption that $\alpha$ is monotone and $\alpha(k)k^2 \rightarrow \infty$ as $k\rightarrow \infty$, implies that for large enough $k$ depending on $\alpha$, we have $0 < \delta \leq 1$ as required by Lemma~\ref{Lemma: Chernoff's Bound}. Also observe that for large enough $k$, depending on $\alpha$,
$$
(2\alpha(k)\ln2+2) \alpha(k) k^2 - 1 \leq \mu \leq (2\alpha(k)\ln2+2) \alpha(k) k^2 +1.
$$
Hence, for large enough $k$ depending on $\alpha$,
\begin{align*}
\mathbb{P}(X_{U,{\vec a}} < \alpha(k) k^2 +1) \leq \exp(\alpha(k)k^2 - \frac{\alpha^2(k)k^4}{2\mu} - \frac{\mu}{2}) \\
\leq \exp(\frac{1}{2}+\alpha(k)k^2 - \frac{\alpha^2(k)k^4}{2(2\alpha(k)\ln2+2) \alpha(k) k^2 +2} - \frac{(2\alpha(k)\ln2+2) \alpha(k) k^2}{2})\\
= \exp((\frac{2-(2\alpha(k)\ln2+2)}{2}-\frac{1}{4\alpha(k)\ln2+4} +o(1))\alpha(k)k^2) \\
\leq \exp((o(1) - \alpha(k)\ln2-\frac{1}{4\ln2+4})\alpha(k)k^2)
\end{align*}
where the asymptotics are in $k$. Now applying the union bound and choosing $k$ to be sufficiently large with respect to $\alpha$,
\begin{align*}
\mathbb{P}(\exists U, {\vec a},\text{ such that }X_{U,{\vec a}} < \alpha(k) k^2 + 1) \leq \dbinom{n}{\alpha(k) k} 2^{\alpha(k) k} \mathbb{P}(X_{U,{\vec a}} < \alpha(k) k^2) \\
\leq \frac{n^{\alpha(k) k}}{\alpha(k) k!} 2^{\alpha(k) k} \exp((o(1) - \ln2\alpha(k)-\frac{1}{4\ln2+4})\alpha(k) k^2) \\
\leq  ((2\alpha(k)\ln2+2) \alpha(k) k^2 2^{\alpha(k) k})^{\alpha(k)k}\exp((o(1) - \ln2\alpha(k)-\frac{1}{4\ln2+4})\alpha(k) k^2) \\
\leq 2^{(1+o(1))\alpha^2(k)k^2}\exp((o(1) - \ln2\alpha(k)-\frac{1}{4\ln2+4})\alpha(k) k^2)\\
= \exp((o(1) - \ln2\alpha(k)-\frac{1}{4\ln2+4})\alpha(k) k^2+(\alpha(k)\ln2+o(1))\alpha(k)k^2) \\
= \exp((o(1)-\frac{1}{4\ln2+4})\alpha(k)k^2)
\end{align*}
recalling that $\alpha(k)k^2\rightarrow \infty$ as $k \rightarrow \infty$ we note that $\mathbb{P}(\exists U, {\vec a},\text{ such that }X_{U,{\vec a}}) \rightarrow 0$ as $k\rightarrow \infty$. Given $\mathbb{P}(\exists U, {\vec a},\text{ such that }X_{U,{\vec a}}) \rightarrow 0$, we conclude there is an integer $N$ such that for all $k \geq N$, $\mathbb{P}(\exists U, {\vec a},\text{ such that }X_{U,{\vec a}}) < 1$. This implies for all for all $k \geq N$, there is a $(\alpha(k)k, \alpha(k) k^2)$-comprehensive tournament of order $\lceil (2\alpha(k)\ln2+2) \alpha(k) k^2 2^{\alpha(k) k} \rceil$ as desired.
\end{proof}

Note that in the statement of the lemma we allow $\alpha = o(1)$.

\begin{theorem}\label{Thm: d<< Delta}
Let  $\alpha: \mathbb{N} \rightarrow (0,1]$ be a monotone function such that $\alpha(k)k^2 \rightarrow \infty$ as $k\rightarrow \infty$. There exists an integer $N$ depending on $\alpha$ such that for all $k \geq N$, if $G$ is a graph with $\Delta(G) \leq k$ and $d(G) \leq \alpha(k)k$, then $\chi_o(G) \leq \lceil (2\alpha(k)\ln2+2) \alpha(k) k^2 2^{\alpha(k) k} \rceil$.
\end{theorem}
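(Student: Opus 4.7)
The plan is to assemble Lemma~\ref{Lemma: (d,kd)-comprehensive} and Lemma~\ref{Lemma: Target (k,t)2}. Specifically, Lemma~\ref{Lemma: (d,kd)-comprehensive} supplies a tournament $T$ of the exact order advertised in the theorem statement, and Lemma~\ref{Lemma: Target (k,t)2} converts comprehensiveness into an oriented homomorphism. The only real work is checking that the parameters match.

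First, given the monotone $\alpha$, I would let $N$ be the integer guaranteed by Lemma~\ref{Lemma: (d,kd)-comprehensive} for this $\alpha$. Then for any $k \geq N$, that lemma produces a $(\alpha(k)k,\,\alpha(k)k^2)$-comprehensive tournament $T$ of order exactly $\lceil (2\alpha(k)\ln 2 + 2)\alpha(k)k^2 2^{\alpha(k)k}\rceil$. Now fix an arbitrary $G$ with $\Delta(G)\leq k$ and $d := d(G) \leq \alpha(k)k$. My goal is to invoke Lemma~\ref{Lemma: Target (k,t)2}, which requires that $T$ be $(d, d\Delta(G))$-comprehensive.

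To get there, I would apply Lemma~\ref{Lemma: Comprehensive} iteratively $\alpha(k)k - d$ times, which shows that $T$ is $(d,\, 2^{\alpha(k)k - d}\alpha(k)k^2)$-comprehensive. Since $d \leq \alpha(k)k$ and $\Delta(G)\leq k$, we have
\[
2^{\alpha(k)k - d}\alpha(k)k^2 \;\geq\; \alpha(k)k^2 \;\geq\; d \cdot k \;\geq\; d\,\Delta(G),
\]
so $T$ is in particular $(d, d\Delta(G))$-comprehensive. Lemma~\ref{Lemma: Target (k,t)2} then yields an oriented homomorphism $G \to T$, giving $\chi_o(G) \leq |V(T)| = \lceil (2\alpha(k)\ln 2 + 2)\alpha(k)k^2 2^{\alpha(k)k}\rceil$, which is the claimed bound.

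The main obstacle is not in this assembly step at all but rather in the probabilistic construction hidden in Lemma~\ref{Lemma: (d,kd)-comprehensive}, which has already been handled. The only potential hazard in the present argument is a parameter mismatch: one must keep in mind that $d$ and $\Delta$ are allowed to vary independently subject to $d \leq \alpha(k)k$ and $\Delta\leq k$, and verify that the comprehensiveness obtained from the random construction is strong enough for \emph{every} such pair, not just the extremal pair $d = \alpha(k)k$, $\Delta = k$. The inequality $2^{\alpha(k)k-d}\alpha(k)k^2 \geq d\Delta$ above uses the cheapest possible slack (dropping the exponential factor entirely) and handles this uniformly, so no case analysis on the size of $d$ relative to $\alpha(k)k$ is needed.
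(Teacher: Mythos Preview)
Your proposal is correct and follows the same route as the paper: combine Lemma~\ref{Lemma: (d,kd)-comprehensive} with Lemma~\ref{Lemma: Target (k,t)2}. The paper's proof is in fact terser than yours---it simply asserts that Lemma~\ref{Lemma: Target (k,t)2} applies once $T$ is $(\alpha(k)k,\alpha(k)k^2)$-comprehensive, without spelling out the reduction to $(d,d\Delta)$-comprehensiveness via Lemma~\ref{Lemma: Comprehensive}; your explicit check of the inequality $2^{\alpha(k)k-d}\alpha(k)k^2\geq d\Delta$ is a welcome bit of extra care.
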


\begin{proof}
Let  $\alpha: \mathbb{N} \rightarrow (0,1]$ be a monotone function such that $\alpha(k)k^2 \rightarrow \infty$ as $k\rightarrow \infty$.By Lemma~\ref{Lemma: (d,kd)-comprehensive} there exists a $(\alpha(k)k, \alpha(k) k^2)$-comprehensive tournament $T$ of order $\lceil (2\alpha(k)\ln2+2) \alpha(k) k^2 2^{\alpha(k) k} \rceil$. If $G$ is a graph with $\Delta(G) \leq k$ and $d(G) \leq \alpha(k)k$, then Lemma~\ref{Lemma: Target (k,t)2} implies that $G$ has a homomorphism to $T$. Thus, $\chi_o(G) \leq \lceil (2\alpha(k)\ln2+2) \alpha(k) k^2 2^{\alpha(k) k} \rceil$ as desired.
\end{proof}

\begin{corollary}\label{Corollary: Asyptotics of d<< D}
If $\{G_n\}$ is a sequence of graphs satisfying that $d(G_n) = o(\Delta(G_n))$, then
$$\chi_o(G_n) \leq (2+o(1))\Delta d 2^{d}$$ where $d= d(G_n)$, and $\Delta=\Delta(G_n)$, and the asymptotics are in $\Delta$.
\end{corollary}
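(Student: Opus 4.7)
The plan is to apply Theorem~\ref{Thm: d<< Delta} pointwise along the sequence, with a choice of $\alpha$ tailored to each $G_n$. Since $d(G_n) = o(\Delta(G_n))$ with asymptotics in $\Delta$, the ratio $d_n/\Delta_n$ is arbitrarily small once $\Delta_n$ is large. For each such $n$, I would take $\alpha_n : \mathbb{N} \to (0,1]$ to be the constant function $\alpha_n(k) \equiv d_n/\Delta_n$; this $\alpha_n$ is trivially monotone and satisfies $\alpha_n(k) k^2 \to \infty$ as $k\to\infty$, so the hypotheses of Theorem~\ref{Thm: d<< Delta} are met.

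Evaluating the theorem's conclusion at $k = \Delta_n$ gives $\alpha_n(k) k = d_n$, so that, provided $\Delta_n$ exceeds the threshold $N(\alpha_n)$,
$$\chi_o(G_n) \le \lceil (2(d_n/\Delta_n) \ln 2 + 2)\, d_n \Delta_n\, 2^{d_n}\rceil.$$
Since $d_n/\Delta_n \to 0$, the leading coefficient tends to $2$, and the right-hand side is $(2 + o(1))\, \Delta_n d_n\, 2^{d_n}$, matching the claimed bound.

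The main obstacle is verifying $\Delta_n \ge N(\alpha_n)$ for all large $n$, since the threshold in Theorem~\ref{Thm: d<< Delta} depends on $\alpha_n$, which itself varies with $n$. I would address this by inspecting the proof of Lemma~\ref{Lemma: (d,kd)-comprehensive}: the dominant negative term in the union-bound exponent is $-\alpha(k) k^2 / (4\ln 2 + 4)$, and the coefficient $1/(4\ln 2 + 4)$ is a universal constant independent of $\alpha$, while the correction terms in the exponent are $o(\alpha(k) k^2)$ with error controlled by quantities like $\ln k / k$ that are uniform in $\alpha \in (0,1]$. Hence $N$ can be chosen uniformly in $\alpha$ over this range, and since $\alpha_n(\Delta_n)\Delta_n^2 = d_n \Delta_n \to \infty$ (using $\Delta_n \to \infty$ and $d_n \ge 1$), the condition $\Delta_n \ge N(\alpha_n)$ is eventually satisfied, completing the argument.
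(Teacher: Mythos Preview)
Your approach is correct and matches the paper's: the corollary is stated without proof as an immediate consequence of Theorem~\ref{Thm: d<< Delta}, obtained exactly by taking $\alpha = d_n/\Delta_n$ so that the coefficient $2\alpha\ln 2 + 2 \to 2$. You go further than the paper by flagging and resolving the dependence of the threshold $N$ on $\alpha$---your observation that the $o(1)$ corrections in Lemma~\ref{Lemma: (d,kd)-comprehensive} are controlled uniformly in $\alpha\in(0,1]$ (by quantities like $\ln k/k$ and $1/(\alpha(k)k^2)$, both tending to zero since $d_n\Delta_n\to\infty$) is a legitimate point the paper glosses over.
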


Notice that in the statement of the corollary given we assume that $d(G_n) = o(\Delta(G_n))$ it is implicit that $\Delta(G_n) \rightarrow \infty$. For the sake of any readers less familiar with asymptotic notation, Corollary~\ref{Corollary: Asyptotics of d<< D} is stating that given any graph sequence as required, for all $\epsilon>0$ there exists a large enough integer $N$ such that for all $n \geq N$, $\chi_o(G_n) \leq (2+\epsilon)\Delta(G_n) d(G_n) 2^{d(G_n)}$. In this way, for any sequence or family of graphs where the degeneracy is sublinear in max degree,  for any $\epsilon>0$, the coefficient from Theorem~\ref{Thm: d<< Delta} may be improved to $2+\epsilon$ in the limit of $\Delta \rightarrow \infty$.

\vspace{0.5cm}
\begin{table}[!h]\label{Table: d<< Delta}
\centering
\begin{tabular}{| c | c | c |} 
\hline 
\makecell{$d \leq $} & \makecell{ $\chi_o(G) \leq $} & \makecell{ $\chi_o(G) = O(-)$}\\ \hline
$\alpha \Delta$ &  $\alpha(2\alpha \ln 2 + 2) \Delta^2 2^{\alpha\Delta}$ & $O(\Delta^2 2^{\alpha \Delta})$\\ \hline
$\Delta^\alpha$ &  $(2+o(1))\Delta^{1+\alpha} 2^{\Delta^\alpha}$ & $O(\Delta^{1+\alpha} 2^{\Delta^\alpha})$ \\ \hline
$\log_{2} \Delta$ &  $(2+o(1))\Delta^2 \log_2{\Delta}$ & $O(\Delta^2 \log{\Delta})$\\ \hline
$c$ &  $(2c2^c + o(1)) \Delta$ & $O(\Delta)$ \\
\hline 
\end{tabular}
\caption{Asymptotic bounds from Theorem~\ref{Thm: d<< Delta} when $d \ll \Delta$. Note that in this example $c$ is a constant.}
\end{table}

\section{Bounds on Oriented Chromatic Number in $2$-Dipath Chromatic Number.}

\begin{lemma}\label{Lemma: (k,t)-full target}
Let $K$ be a $(k,d,N)$-full graph. If $G$ is a graph with $\chi_2(G) \leq k$ and degeneracy $d(G) \leq d$, then $G$ has an oriented homomorphism to $K$.
\end{lemma}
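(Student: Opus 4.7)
The plan is to build an oriented homomorphism $h : V(G)\rightarrow V(K)$ inductively along a degeneracy ordering $v_1,\dots,v_n$ of $V(G)$, where the target part in $K=(P_1,\dots,P_k,E)$ is dictated by a fixed $2$-dipath $k$-colouring $c : V(G)\rightarrow [k]$. The invariants to maintain at stage $i$ are: (a) $h(v_j)\in P_{c(v_j)}$ for every $j\leq i$, and (b) $h$ restricted to $\{v_1,\dots,v_i\}$ is a homomorphism from $G[\{v_1,\dots,v_i\}]$ into $K$. The base case $h(v_1)\in P_{c(v_1)}$ can be chosen arbitrarily.

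For the inductive step at $v_{i+1}$, set $A=N(v_{i+1})\cap\{v_1,\dots,v_i\}$. Degeneracy gives $|A|\leq d$, while properness of $c$ combined with invariant (a) places $h(A)\subset\bigcup_{j\neq c(v_{i+1})}P_j$. I would then define a sign vector $\vec{a}$ on the set $h(A)$ by assigning to each $y\in h(A)$ the entry of $F(A,v_{i+1},G)$ corresponding to any preimage of $y$ in $A$, and apply the $(k,d,N)$-fullness of $K$ to obtain a vertex $z\in P_{c(v_{i+1})}$ with $F(h(A),z,K)=\vec{a}$; setting $h(v_{i+1})=z$ preserves both invariants, since edges between $A$ and $v_{i+1}$ are mapped to correctly oriented edges between $h(A)$ and $z$.

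The main delicacy, and the only place the $2$-dipath hypothesis really enters, is that $h$ need not be injective on $A$. Two vertices $u,w\in A$ with $h(u)=h(w)$ must lie in the same part of $K$ and therefore satisfy $c(u)=c(w)$, so the sign vector $\vec{a}$ is well-defined only if the edges $(u,v_{i+1})$ and $(w,v_{i+1})$ carry matching orientations in $G$. This is exactly what a $2$-dipath colouring enforces: a mismatched pair, say $u\rightarrow v_{i+1}$ and $v_{i+1}\rightarrow w$, would yield the directed path $u\rightarrow v_{i+1}\rightarrow w$ and force $c(u)\neq c(w)$, contradicting $c(u)=c(w)$; the opposite mismatch is ruled out symmetrically. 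Hence $\vec{a}$ is consistent, $|h(A)|\leq|A|\leq d$ fits within the scope of the $(k,d,N)$-fullness hypothesis, and iterating the inductive step through $i=n$ produces the required homomorphism $G\rightarrow K$.
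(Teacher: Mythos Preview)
Your proof is correct and follows essentially the same approach as the paper's: build the homomorphism greedily along a degeneracy ordering, sending each vertex into the part of $K$ prescribed by a fixed $2$-dipath colouring, and use the $(k,d,N)$-fullness to pick a target vertex with the right orientation vector toward the already-mapped neighbours. You are in fact slightly more explicit than the paper about why the sign vector on $h(A)$ is well-defined when $h$ is not injective on $A$, but the argument is the same.
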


\begin{proof}
    Let $G=(V,E)$ be a graph with $\chi_2(G) = k$ and degeneracy $d(G) = d$ and let $K$ is a $(k,d,N)$-full graph. Let $\phi:V \rightarrow \{1,\dots, k\}$ be a $2$-dipath colouring of $G$. We will build an oriented homomorphism $h:G \rightarrow K$ satisfying $h(u) \in P_i$ if and only if $i = \phi(u)$. 

    Let $v_1,v_2, \dots, v_n$ be a degeneracy ordering of $G$, that is $|N(v_j) \cap \{v_1,\dots, v_{j-1}\}| \leq d$ for all $1 \leq j\leq n$. Suppose we have defined $h(v_j)$ for all $j < s$ satisfying our assumed condition that $h(u) \in P_i$ if and only if $i = \phi(u)$. Let $A = N(v_s) \cap \{v_1,\dots, v_{s-1}\}$. By our choice of the degenerate ordering, $|A|\leq d$. Now consider $h(A)$ the image of $A$ under $h$. Notice that $|h(A)| \leq |A| \leq d$ and $h(u) \notin P_{\phi(v_s)}$ for all $u \in A$ given $\phi(u) \neq \phi(v_s)$ as $\phi$ is a proper colouring. Let ${\vec a} \in \{-1,1\}^{|A|}$ such that $F(A,v_s,G) = {\vec a}$. Then let ${\vec b} \in  \{-1,1\}^{|h(A)|}$ be defined by ${\vec b}(h(u)) = {\vec a}(u)$. Note that as $\phi$ is a $2$-dipath colouring of $G$, if $h(u) = h(w)$ for $u,w \in A$, then in $G$ both edges $uv_s$ and $wv_s$ are both oriented either from or towards $v_s$. So $\vec{b}$ is well defined.

    Given $K$ is $(k,d, N)$-full, there exists $x \in P_{\phi(v_s)}$ such that $F(h(A),x,K) = {\vec b}$. Let $h(v_s) = x$. It is clear that $h$ is a homomorphism as required.
\end{proof}

\begin{lemma}\label{Lemma: (k,t)-full}
If $k \geq 2$ and $t \geq \log_2{k}$, then there exists a $(k,t, \frac{33}{10} t^{2} 2^{t})$-full graph.
\end{lemma}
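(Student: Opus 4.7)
The plan is to build a $(k,t,N)$-full orientation by the probabilistic method, closely following the template of Lemma~\ref{Lemma: d<=k-1}. Set $N = \lceil \tfrac{33}{10} t^{2} 2^{t} \rceil$ and orient each edge of the complete $k$-partite graph $K_{N,\dots,N}$ independently and uniformly at random to obtain a random orientation $K$; the goal is to show that with positive probability $K$ is $(k,t,N)$-full.

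First I would bound the failure probability of a single bad triple $(i, A, \vec a)$, with $i \in [k]$, $A \subseteq \bigcup_{j \neq i} P_j$ of size $t$, and $\vec a \in \{-1,1\}^{t}$. For each $v \in P_i$, the $t$ edges from $v$ to $A$ are oriented independently and uniformly, so $\mathbb{P}(F(A,v,K) = \vec a) = 2^{-t}$, and these events are mutually independent across $v \in P_i$. Hence the probability that no $v \in P_i$ realises $\vec a$ on $A$ is at most
$$
(1 - 2^{-t})^N \;\leq\; \exp(-N/2^{t}) \;\leq\; \exp\bigl(-\tfrac{33}{10}\,t^{2}\bigr).
$$

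The second step is the union bound over all such triples. Since there are at most $k \cdot \binom{(k-1)N}{t} \cdot 2^{t}$ of them, and since the hypothesis $t \geq \log_2 k$ gives $k \leq 2^t$, the probability that $K$ fails to be $(k,t,N)$-full is at most
$$
2^t \cdot \binom{(k-1)N}{t} \cdot 2^{t} \cdot \exp\bigl(-\tfrac{33}{10}\,t^{2}\bigr).
$$
Using the standard estimate $\binom{(k-1)N}{t} \leq (e(k-1)N/t)^t$ together with $(k-1)N \leq \tfrac{33}{10}\,t^{2} 2^{2t}$ and collecting the powers of $2$, the logarithm of this expression behaves like $-\bigl(\tfrac{33}{10} - 2\ln 2\bigr)\,t^{2} + O(t \log t)$. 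Because $\tfrac{33}{10} > 2\ln 2 \approx 1.386$, the dominant term is negative, so the union bound falls strictly below $1$ for all sufficiently large $t$ and a $(k,t,N)$-full graph exists.

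The main obstacle I anticipate is that the statement is asserted uniformly over $t \geq \log_2 k$ (and in particular for the small cases $t = 1, 2$), not merely for $t$ large, so the constant $33/10$ must be chosen tightly enough for the union bound to close at every admissible $t$ where the $O(t\log t)$ slack is not yet negligible. To handle this I would carry explicit constants through the calculation rather than $o(1)$ terms, sharpen $\binom{(k-1)N}{t}$ via Stirling instead of the cruder $(kN)^t$, and if needed verify the handful of remaining smallest $(k,t)$ pairs by direct computation, in the spirit of Table~\ref{Table: S2.1 coefficient}. A minor bookkeeping point is the handling of $|A| < t$ allowed by the definition of $(k,t,N)$-full: witnessing every length-$t$ sign pattern on size-$t$ sets automatically witnesses every pattern on smaller subsets by restriction, so it suffices to argue the extremal case $|A|=t$ above.
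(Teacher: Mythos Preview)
Your proposal is correct and mirrors the paper's proof essentially step for step: a uniformly random orientation of $K_{N,\dots,N}$, the single-pattern failure bound $(1-2^{-t})^N \le e^{-\frac{33}{10}t^2}$, and a union bound over $i$, $A$, and $\vec a$ exploiting $k\le 2^t$. The paper handles the small-$t$ concern exactly as you anticipate---by carrying explicit constants throughout (using in particular $\binom{(k-1)N}{t}\le (kN)^t/t!$ together with $2^t/t!\le 2$) rather than asymptotic $O(t\log t)$ slack---so no auxiliary computer check is required, and your observation that the $|A|<t$ case follows by restriction is correct and left implicit there.
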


\begin{proof}
Let $k \geq 2$ and $t \geq \log_2{k}$. Notice that this implies $k \leq 2^t$. Consider a random orientation of the complete $k$-partite graph $K = K_{N,\dots, N} = (P_1,\dots, P_k,E)$ where $N = \frac{33}{10} t^{2} 2^{t}$. Note each edge of $K$ is oriented independently and uniformly. For each fixed value $i \in \{1,2,\dots, k\}$ and a subset $A\subset \cup_{j\neq i} P_j$ satisfying $|A|=t$, let $X_{i,A}$ be the random variable 
\[
X_{i,A} := \sum_{{\vec a} \in \{-1,1\}^t} \mathds{1}_{\forall v \in P_i, F(A,v,K)\neq {\vec a}}.
\]
That is, $X_{i,A}$ counts the number of vectors ${\vec a}  \in \{-1,1\}^k$ such that no vertex in $P_i$ has orientation ${\vec a}$ with respect to $A$. Observe that the orientation of two edges between a vertex of $A$ and two distinct vertices of $P_i$ is independent. Furthermore, observe that this implies $X_{i,A} = 0$ is equivalent to a random function from a domain of size $N$ to a codomain of size $2^t$ being surjective. Hence, 
\[
\mathbb{P}(X_{i,A}>0) \leq 2^t (1-2^{-t})^N \leq 2^t e^{-2^{-t}N} = 2^t e^{-\frac{33}{10}t^2}
\]
Applying the union bound,
\begin{center}
\begin{align*}
\mathbb{P}(\exists i, A, \text{ such that }  X_{i,A} > 0) \leq k \dbinom{(k-1)N}{t} \mathbb{P}(X_{i,A}>0) \\
\leq k\dbinom{(k-1)N}{t} 2^t e^{-\frac{33}{10}t^2} \\
\leq k \frac{(kN)^t}{t!} 2^t  e^{-\frac{33}{10}t^2}\\
\leq 2k^{t+1}N^t e^{-\frac{33}{10}t^2}\\
\end{align*}
\end{center}
applying a logarithm to simplify the computation gives
\begin{center}
\begin{align*}
1+(t+1)\log_2{k} + t\log_2(N) - \frac{33}{10}\log_2(e)t^2 \\
\leq 1+(t+1)\log_2{2^t} + t\log_2(\frac{33}{10}t^2 2^t) - \frac{33}{10}\log_2(e)t^2\\
\leq 1 + (1+\log_2{\frac{33}{10}})t + 2\log_2(t)t+ (1-\frac{33}{10}\log_2(e))t^2 < 0
\end{align*}
\end{center}
for all $t \geq 1$. We note that $t \geq \log_2{k} \geq \log_2{2} = 1$ given $k \geq 2$ implies that $t \geq 1$. As this is a logarithm of an upper bound on $\mathbb{P}(\exists i, A, \text{ such that }  X_{i,A} > 0)$, it follows for all $k \geq 2$, and $t \geq \log_2{k}$, $\mathbb{P}(\exists i, A, \text{ such that }  X_{i,A} > 0)< 1$. Thus, for all $k \geq 2$  $t \geq \log_2{k}$ there exists a $(k,t, \frac{27}{10} t^{2} 2^{t})$-full graph.
\end{proof}

We note that the coefficient of $\frac{33}{10}$ in Lemma~\ref{Lemma: (k,t)-full} can be improved to $\frac{1}{\log_2(e)}+ \epsilon$, for any $\epsilon>0$, if we alter the statement to suppose that $k$ (and therefore $t$) are sufficiently large. However, doing so would mean that Lemma~\ref{Lemma: (k,t)-full} could not be applied in proving Theorem~\ref{Thm: Oriented<= 2dipath} unless a similar assumption about $k$ and $t$ being large is made. Thus, our result would not apply to all graphs. For example taking the coefficient to be $1$ rather than $33/10$ we must choose $t \geq 23$ rather than $t \geq 1$, which would force $k\geq 2^{23}$. We chose to set our coefficient at $33/10$ as it the smallest ``nice" fraction so that Theorem~\ref{Thm: Oriented<= 2dipath} applies to all graphs with at least $1$ edge. See Table~\ref{Table: O <= 2-Di} for a longer list of possible improvements to the coefficient in Lemma~\ref{Lemma: (k,t)-full} and Theorem~\ref{Thm: Oriented<= 2dipath}, as well as an indication of the smallest $k$ and $t$ where these coefficients could be applied.

\begin{theorem}\label{Thm: Oriented<= 2dipath}
Let $k\geq 2$ be a fixed but arbitrary integer. Then for all integers $t \geq \log_2{k}$ and for all graphs $G$ with $d(G) \leq t$ and $\chi_2(G) \leq k$, $$\chi_o(G) \leq \frac{33}{10} k t^{2} 2^{t}.$$
\end{theorem}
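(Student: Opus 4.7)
The plan is to combine the two preceding lemmas in the obvious way, since together they provide both halves of the needed machinery. First I would invoke Lemma~\ref{Lemma: (k,t)-full} with the given parameters $k \geq 2$ and $t \geq \log_2 k$ to obtain a $(k, t, N)$-full orientation $K$ of the complete $k$-partite graph $K_{N,\dots,N}$, where $N = \frac{33}{10} t^{2} 2^{t}$. By the definition of a $(k,t,N)$-full graph given in the preliminaries, $K$ has exactly $kN = \frac{33}{10} k t^{2} 2^{t}$ vertices, partitioned into the parts $P_1,\dots,P_k$.

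Next, for any graph $G$ satisfying the hypotheses $\chi_2(G) \leq k$ and $d(G) \leq t$, I would apply Lemma~\ref{Lemma: (k,t)-full target} (with $d=t$) to the target $K$ to produce an oriented homomorphism $h : G \to K$. Composing this homomorphism with the labelling of $V(K)$ by $\{1, \dots, \frac{33}{10} k t^{2} 2^{t}\}$ yields an oriented colouring of $G$ using at most $\frac{33}{10} k t^{2} 2^{t}$ colours, which immediately gives the claimed inequality $\chi_o(G) \leq \frac{33}{10} k t^{2} 2^{t}$.

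The main obstacle here has already been absorbed into the preceding lemmas: the probabilistic existence proof in Lemma~\ref{Lemma: (k,t)-full} is what forces the coefficient $\frac{33}{10}$ and what required the calibration between $k$ and $t$ (through the constraint $t \geq \log_2 k$, so that $k \leq 2^t$ in the union bound), while the inductive extension argument in Lemma~\ref{Lemma: (k,t)-full target} is what uses the $2$-dipath hypothesis, via the fact that neighbours of $v_s$ with equal images under $h$ must share the same orientation towards $v_s$, so that the constraint vector $\vec b$ for the remaining step is well-defined. Once both lemmas are granted, the theorem itself should reduce to a single paragraph that simply chains the two statements and records the cardinality $kN$ of the target.
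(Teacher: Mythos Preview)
Your proposal is correct and matches the paper's proof essentially line for line: invoke Lemma~\ref{Lemma: (k,t)-full} to obtain a $(k,t,\frac{33}{10}t^2 2^t)$-full graph $K$, then apply Lemma~\ref{Lemma: (k,t)-full target} to get an oriented homomorphism $G\to K$, and read off $\chi_o(G)\le |V(K)| = \frac{33}{10}kt^2 2^t$. Your commentary on where the real work lies (the probabilistic construction and the well-definedness of $\vec b$ via the $2$-dipath hypothesis) is also accurate.
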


\begin{proof}
Let $k\geq 2$ and $t \geq \log_2{k}$ be fixed but arbitrary integers and let $G$ be a graph with $d(G) \leq t$ and $\chi_2(G) \leq k$. By Lemma~\ref{Lemma: (k,t)-full} there exists a $(k,t, \frac{33}{10} t^{2} 2^{t})$-full graph $K$. Then Lemma~\ref{Lemma: (k,t)-full target} implies $G$ has an oriented homomorphism to $K$. Thus, $\chi_o(G) \leq |V(K)| = \frac{33}{10} k t^{2} 2^{t}$ as desired.
\end{proof}

\vspace{0.5cm}
\begin{table}[!h]\label{Table: O <= 2-Di}
\centering
\begin{tabular}{| c | c | c |} 
\hline 
\makecell{\textit{Coefficient}} & \makecell{$k \geq$} & \makecell{$t \geq$} \\ \hline
$33/10$ &  $2$ & $1$\\ \hline
$3$ &  $4$ & $2$\\ \hline
$5/2$ &  $4$ & $2$\\ \hline
$2$ &  $8$ & $3$\\ \hline
$3/2$ &  $2^6$ & $6$\\ \hline
$1$ &  $2^{23}$ & $23$\\ \hline
$3/4$ &  $2^{193}$ & $193$\\ \hline
$7/10$ &  $2^{2310}$ & $2310$\\ \hline
$139/200$ &  $2^{10135}$ & $10135$\\ \hline
\hline 
\end{tabular}
\caption{Improved coefficients for Theorem~\ref{Thm: Oriented<= 2dipath}, the smallest $k$ such that they apply, and the smallest $t$ where this applies given the smallest $k$ column.}
\end{table}

Significantly we can obtained a particularly nice corollary if the $2$-dipath chromatic number of a class is not bounded, but the class has bounded degeneracy. This is because for graphs in such a class we can take $t = \max\{\log_2(\chi_2(G)),d\}$ where $d$ is an upper bound on the degeneracy of every graph in our class, which gives an asymptotic upper bound of $(\frac{33}{10}+o(1)) k^2 \log^2_2(k)$ for the oriented chromatic number of such graphs $G$. Recall that such classes exist as demonstrated in the introduction. Also recall such classes are notable given the result of Dvo{\v{r}}{\'a}k in \cite{dvovrak2008forbidden}.

\begin{corollary}\label{Corollary: Graphs with bounded degeneracy}
If $\mathcal{G}$ is a family of graphs with bounded degeneracy,  then for $G \in \mathcal{G}$, $\chi_o(G)  = O(\chi_2(G)^{2+o(1)})$ where the asymptotics are in $\chi_2(G)$. 
\end{corollary}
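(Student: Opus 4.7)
The plan is to apply Theorem~\ref{Thm: Oriented<= 2dipath} with a carefully chosen $t$, exploiting the bounded-degeneracy hypothesis to keep $t$ close to $\log_2 \chi_2(G)$ as $\chi_2(G) \to \infty$. Fix an integer $D$ such that $d(H) \leq D$ for every $H \in \mathcal{G}$. Given $G \in \mathcal{G}$, set $k := \chi_2(G)$ and
\[
t := \max\{\lceil \log_2 k \rceil,\, D\}.
\]
By construction $t \geq \log_2 k$ and $t \geq D \geq d(G)$, so both hypotheses of Theorem~\ref{Thm: Oriented<= 2dipath} are met (assuming $k \geq 2$, which we may; the finitely many graphs with $k \leq 1$ can be absorbed into the $O(\cdot)$ constant). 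The theorem then yields
\[
\chi_o(G) \;\leq\; \tfrac{33}{10}\, k\, t^{2}\, 2^{t}.
\]

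Next I would analyse the asymptotics in $k = \chi_2(G)$. Since $D$ is a fixed constant, for all sufficiently large $k$ we have $\lceil \log_2 k \rceil \geq D$, and so $t = \lceil \log_2 k \rceil \leq \log_2 k + 1$. This gives $2^{t} \leq 2k$ and $t^{2} \leq (\log_2 k + 1)^{2} = O(\log^{2} k)$, hence
\[
\chi_o(G) \;\leq\; \tfrac{33}{10}\, k \cdot O(\log^{2} k) \cdot 2k \;=\; O\!\bigl(k^{2}\log^{2} k\bigr).
\]
Finally, since $\log^{2} k = k^{o(1)}$ as $k \to \infty$, this collapses to $\chi_o(G) = O(k^{2+o(1)}) = O(\chi_2(G)^{2+o(1)})$, as required.

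There is no real obstacle here beyond bookkeeping: the work has already been done in Theorem~\ref{Thm: Oriented<= 2dipath}, and the corollary is extracted simply by choosing $t$ to be as small as the theorem's two constraints allow. The only minor point to be careful about is the regime of small $k$ (where $D$ dominates $\log_2 k$), but for fixed $D$ this contributes only a constant to the final bound and is harmless inside the $O(\cdot)$ notation with asymptotics in $\chi_2(G)$.
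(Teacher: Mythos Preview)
Your proposal is correct and matches the paper's approach: the paragraph immediately preceding the corollary explicitly prescribes taking $t = \max\{\log_2(\chi_2(G)), d\}$ (with $d$ the uniform degeneracy bound on $\mathcal{G}$) and applying Theorem~\ref{Thm: Oriented<= 2dipath} to obtain $\chi_o(G) \leq (\tfrac{33}{10}+o(1))k^{2}\log_2^{2} k = O(k^{2+o(1)})$, which is exactly what you do (with the harmless refinement of using $\lceil \log_2 k\rceil$ so that $t$ is an integer). One cosmetic slip: there need not be only finitely many graphs in $\mathcal{G}$ with $k\leq 1$, but any such graph is edgeless and has $\chi_o\leq 1$, so the point stands.
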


\section{Future Work}

In this paper we give improved bounds for the oriented chromatic number in terms of $2$-dipath chromatic number, maximum degree, and degeneracy. In both cases, we colour vertices inductively on a degeneracy ordering, where the existence of a colour for the next vertex is guaranteed by a special property of a large graphs which acts as a universal target. Hence, our arguments hinge on establishing the existence of such graphs (either $(k,t,N)$-full or $(k,t)$-comprehensive) of a given order, which we prove using the first moment method. Some natural questions immediately follow from this. 

Perhaps most obvious is, how close are the upper bounds of this paper from being tight? That is, prove or disprove the existence of graphs $G$ with oriented chromatic number near our bounds. At time of writing little is known about this in the literature. The notable exception to this being result from \cite{kostochka1997acyclic} which shows that there are graphs of maximum degree $\Delta$ and oriented chromatic number approximately $ 2^{\Delta/2}$. As a result it seems that progress on lower bounds of this kind would require new and creative ideas that might provide new insights into the oriented chromatic number.

Additionally, Theorem~\ref{Thm: Oriented<= 2dipath} opens the door to similar questions in the perhaps easier to understand context  of well behaved graph class rather than general graphs. In particular families with bounded degeneracy seem good candidate classes given Corollary~\ref{Corollary: Graphs with bounded degeneracy}. To this end we conjecture that Corollary~\ref{Corollary: Graphs with bounded degeneracy} is tight up to a subpolyonomial factor.

\begin{conjecture}
    There exists a class of graphs $\mathcal{G}$ with bounded degeneracy, such that there exists a sequence of graphs  $\{G_n\}_{n\geq 1}$ in $\mathcal{G}$ with $\chi_2(G_n) \rightarrow \infty$ and $\chi_o(G_n) = \Omega(\chi_2(G_n)^2)$ as $n\rightarrow \infty$.
\end{conjecture}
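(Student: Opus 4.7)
The plan is to exhibit a sequence $\{G_n\}$ in some bounded-degeneracy class along which $\chi_o(G_n)$ grows quadratically in $\chi_2(G_n)$. The most obvious candidate—the $2$-dipath subdivision of $K_n$ (each edge of $K_n$ replaced by a directed $2$-path)—has degeneracy $2$ and, by Wood~\cite{wood2005acyclic}, satisfies $\chi_o(G_n) \geq n$. However, a direct analysis of its directed square shows that the original vertices already force $n$ pairwise distinct $2$-dipath colours, while the subdivision vertices admit a colouring using $O(n)$ additional colours, so $\chi_2(G_n) = \Theta(n)$ as well. This yields only a linear separation $\chi_o \sim \chi_2$, and a more delicate construction is required.

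My preferred attack is probabilistic. Take $G_n$ to be a uniformly random orientation of $G(n, c/n)$ for a suitable constant $c$; such graphs lie in a class of bounded degeneracy with high probability, so they satisfy the class hypothesis. The goal is to show that, with positive probability, $\chi_2(G_n) \le f(n)$ while $\chi_o(G_n) \ge c' f(n)^2$ for some $f(n) \to \infty$. The upper bound on $\chi_2$ should follow from a standard greedy argument on $G_n^2$, which is itself $O(c^2)$-degenerate. For the matching lower bound on $\chi_o$, one would adapt the counting argument of Kostochka, Sopena, and Zhu~\cite{kostochka1997acyclic}: upper-bound the number of oriented homomorphisms from $G_n$ to any tournament $T$ of order below $c' f(n)^2$, then show that with positive probability over the random orientation no such $T$ can serve as a target.

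The main obstacle is exactly this lower bound on $\chi_o$. Outside the $2^{\Delta/2}$-type bound of Kostochka--Sopena--Zhu (which relies on large maximum degree) and the trivial $\chi_o \ge \chi$, essentially no non-trivial lower bounds for $\chi_o$ appear in the literature. In the bounded-degeneracy regime, the maximum degree of $G(n,c/n)$ grows only as $\Theta(\log n / \log\log n)$, which may not be enough to drive $\chi_o$ up at the rate required. A natural relaxation would be to allow the degeneracy to grow mildly with $\chi_2$, say $d(G_n) = \Theta(\log \chi_2(G_n))$, a regime where Theorem~\ref{Thm: Oriented<= 2dipath} still delivers a genuinely polynomial upper bound that one might hope to match.

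A cleaner alternative would be an explicit construction. Start from a family $\{H_m\}$ that nearly saturates the MacGillivray--Raspaud--Swartz bound $\chi_o(H_m) \le 2^{\chi_2(H_m)}-1$, and apply a sparsification gadget—e.g. replacing each vertex by a bounded-degeneracy structure, or iterated subdivision in the spirit of Figure~3—that preserves $\chi_o$ up to a polynomial factor while keeping $\chi_2$ controlled. Designing such a gadget that simultaneously flattens the degeneracy and retains a near-quadratic gap between the two parameters is the most concrete sub-task; a successful choice would settle the conjecture directly via Corollary~\ref{Corollary: Graphs with bounded degeneracy}.
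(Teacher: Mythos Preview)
The statement you are attempting to prove is a \emph{conjecture} in the paper, not a theorem: the paper offers no proof, and explicitly presents it as an open problem motivated by Corollary~\ref{Corollary: Graphs with bounded degeneracy}. There is therefore nothing in the paper to compare your argument against.

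Your proposal is not a proof either, and you are candid about this. The subdivision-of-$K_n$ construction is correctly analysed and correctly discarded: it gives $\chi_o(G_n)=\Theta(\chi_2(G_n))$, not $\Omega(\chi_2(G_n)^2)$. For the probabilistic route via random orientations of $G(n,c/n)$, you identify the real obstruction yourself: the only general-purpose lower bound for $\chi_o$ in the literature is the $2^{\Delta/2}$ counting argument of Kostochka--Sopena--Zhu, and in a sparse random graph the maximum degree is only $\Theta(\log n/\log\log n)$, which does not produce a quadratic gap over $\chi_2$. Your ``sparsification gadget'' idea is a reasonable heuristic, but you do not propose any concrete gadget, and the requirement that it simultaneously bound degeneracy, preserve $\chi_o$ up to a polynomial, and keep $\chi_2$ small is precisely the unresolved core of the conjecture. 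In short, what you have written is a sensible survey of plausible attacks and their known obstacles, which is appropriate for an open problem, but it does not constitute a proof and should not be presented as one.
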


\section*{Acknowledgements}
We would like to thank Dr.Peter Bradshaw (University of Illinois Urbana-Champaign) for providing feedback during the process of drafting this paper. We would also like to acknowledge the support of the Natural Sciences and Engineering Research Council of Canada (NSERC) for support through the Canadian Graduate Scholarship - Master program, and supported in part by the NSERC Discovery Grant R611368.

\bibliographystyle{plain}
\bibliography{OrientedColoring}

\end{document}